\newtheorem*{rep@theorem}{\rep@title}
\newcommand{\newreptheorem}[2]{%
\newenvironment{rep#1}[1]{%
 \def\rep@title{#2 \ref{##1}}%
 \begin{rep@theorem}}%
 {\end{rep@theorem}}}
\newtheorem{intro_thm}{Theorem}
\newtheorem{intro_prop}[intro_thm]{Proposition}
\newtheorem{lemma}{Lemma}[section]
\newtheorem{prop}[lemma]{Proposition}
\newtheorem{cor}[lemma]{Corollary}
\newtheorem{setup}[lemma]{Setup}
\theoremstyle{definition}
\newtheorem{defn}[lemma]{Definition}
\theoremstyle{remark}
\newtheorem{oss}[lemma]{Remark}
\newcommand\matH{{\mathbb{H}}^p_{\mathbb{C}}}
\newcommand\matR{{\mathbb{R}}}
\newcommand\matC{{\mathbb{C}}}
\newcommand\calS{{\mathcal S}_{m,n}}
\newcommand\calQ{{\mathcal Q}}
\newcommand\calC{{\mathcal C}}
\newcommand\calB{{\mathcal B}}
\newcommand{\pu}{\textup{PU}(p,1)}
\newcommand{\su}{\textup{SU}(m,n)}
\newcommand{\Hm}{\textup{H}}
\newcommand{\Hb}{\textup{H}_{\text{b}}}
\newcommand{\Hc}{\textup{H}_{\text{c}}}
\newcommand{\Hcb}{\textup{H}_{\text{cb}}}
\newcommand{\Linf}{\text{L}^{\infty}}
\newcommand{\Tbbullet}{\textup{T}_b^{\bullet}}
\newcommand{\Tbdue}{\textup{T}_b^2}
\newcommand{\rk}{\text{rk}}
\newcommand{\Meas}{\text{Meas}}
\newcommand{\Isom}{\text{Isom}}
\newcommand{\Stab}{\text{Stab}}
\newcommand{\Prob}{\text{Prob }}
\begin{document}

\title[Superrigidity maximal measurable cocycles]{Superrigidity of maximal measurable cocycles of complex hyperbolic lattices}

\author[F. Sarti]{F. Sarti}
\address{Department of Mathematics, University of Bologna, Piazza di Porta San Donato 5, 40126 Bologna, Italy}
\email{filippo.sarti8@unibo.it}

\author[A. Savini]{A. Savini}
\address{Section de Math\'ematiques, University of Geneva, Rue Du Li\`evre 2, Geneva 1227, Switzerland}
\email{alessio.savini@unige.ch}

\date{\today.\ \copyright{\ F. Sarti, A. Savini 2021}. The first author 
was supported through GNSAGA, funded by the INdAM. The second  author 
was partially supported by the project Geometric and harmonic analysis with applications, funded by EU Horizon 2020 under the Marie Curie Grant Agreement No. 777822. 
}

\begin{abstract}
Let $\Gamma$ be a torsion-free lattice of $\pu$ with $p \geq 2$ and let $(X,\mu_X)$ be an ergodic standard Borel probability $\Gamma$-space. 
We prove that any maximal Zariski dense measurable cocycle $\sigma: \Gamma \times X \longrightarrow \su$
is cohomologous to a cocycle associated to a representation of $\pu$ into $\su$, with $1 \leq m \leq n$.
The proof follows the line of Zimmer' Superrigidity Theorem and requires the existence of a boundary map, that we prove in a much more general setting. As a consequence of our result, there cannot exist maximal measurable cocycles with the above properties when $1 < m < n$.

\end{abstract}
  
\maketitle

\section{Introduction}


Given $\Gamma \leq L$ a torsion-free lattice of a semisimple Lie group, a fruitful way to study representation spaces of $\Gamma$ is based on numerical invariants coming from bounded cohomology. Typically those invariants have bounded absolute value and representations attaining the maximum (that is \emph{maximal representations}) are all conjugated to some fixed representation of the ambient group. For instance, in the particular case of surface groups, Goldman \cite{Goldth} studied the relation between the Teichm\"uller space and the maximality of the Euler invariant. Indeed the maximality of the latter corresponds to the choice of a specific component of the $\textup{PSL}(2,\matR)$-character variety.  By substituting $\textup{PSL}(2,\matR)$ with any Hermitian Lie group $G$, Burger, Iozzi and Wienhard \cite{BIW09} gave a structure theorem for \emph{tight} representations of locally compact groups into $G$. A representation is called tight if the map induced in bounded cohomology preserves the norm of the K\"ahler class $\kappa^b_G$ associated to $G$. Later, the same authors focused their attention on surface groups (also with boundary components) and studied systematically maximal representations into Hermitian Lie groups leading to a complete characterization of such representations \cite{BIW1}. 

In this paper we are going to focus our attention on complex hyperbolic lattices, that is torsion-free lattices of $\pu$, with $p \geq 2$. The theory of maximal representations of complex hyperbolic lattices has been widely studied so far. For instance, when $\Gamma$ is a \emph{non-uniform} lattice, Koziarz and Maubon \cite{koziarz:maubon} showed that maximal representations into the group $\textup{PU}(q,1)$, where $q \geq p \geq 2$, must be conjugated to the standard lattice embedding $\Gamma \rightarrow \pu$, possibly modulo a compact subgroup when $q>p$. This result was obtained by using techniques relying on the theory of harmonic maps, whereas Burger and Iozzi \cite{BIcartan} proved the same result using a bounded cohomology approach. 

Also some superrigidity phenomena are given. For instance, Pozzetti \cite{Pozzetti} proved that maximal representations into $\su$ with Zariski dense image must be induced by representations of the ambient group $\pu$. In particular, this suggests that when $1 < m < n$ such representations cannot exist. Additionally, she gave also a structure theorem for maximal representations, following the line of some previous works by Hamlet and herself \cite{Ham13,Ham,HP}. More recently Koziarz and Maubon \cite{KM17} refined such a characterization with the use of Higgs bundles. 

Inspired by the work by Bader, Furman and Sauer \cite{sauer:articolo} for couplings and by Burger and Iozzi \cite{burger:articolo} for representations, one of the author together with Moraschini \cite{savini3:articolo,moraschini:savini,moraschini:savini:2,savini2020} has recently developed the theory of numerical invariants of measurable cocycles, that is part of a program whose aim is to generalize rigidity results for representations of rank-one lattices. Those invariants are obtained by pulling back preferred classes in bounded cohomology along cocycles, imitating the case of representations. The aim of this paper is to apply this machinery to the study of measurable cocycles of complex hyperbolic lattices to get a superrigidity result similar to the one obtained by Pozzetti. 
More precisely, let $\Gamma < \pu$ be a torsion-free lattice, $(X,\mu_X)$ be an ergodic standard Borel probability $\Gamma$-space and $\sigma:\Gamma\times X\rightarrow \su$ be  a measurable cocycle. 
Exploiting the K\"{a}hler form on the symmetric space associated to $\su$, one can define the \emph{bounded K\"{a}hler class} of $\su$ in $\Hcb^2(\su;\matR)$, that we denote by $k_{\su}^b$. The pullback of such a class along $\sigma$ gets back a class in $\Hb^2(\Gamma;\matR)$ and through the transfer map we obtain a class in $\Hcb^2(\pu;\matR)$.  Hence we can consider the multiplicative constant obtained by comparing such a class with the \emph{Cartan class} in $\Hcb^2(\pu;\matR)$. This turns out to be a numerical invariant $\textup{t}_b(\sigma)$ that we are going to call \emph{Toledo invariant} associated to $\sigma$. Since the latter has absolute value bounded by the rank of $\su$, it makes sense to define the notion of \emph{maximal measurable cocycles} as those ones with maximal Toledo invariant.

If in addition $\sigma$ admits a \emph{boundary map} into the Shilov boundary $\calS$ of $\su$, namely a measurable map $\phi:\partial\matH \times X \rightarrow \calS$ which is $\sigma$-equivariant,  the machinery of \cite{moraschini:savini}, \cite{moraschini:savini:2} can be exploited to get a useful representative for the pull back of $k^b_{\su}$.
More precisely, we recall that $\calS$ can be identified with the quotient of $\su$ by a maximal parabolic subgroup stabilizing a maximal isotropic subspace of $\mathbb{C}^{n+m}$. 
On this boundary one can define naturally a bounded measurable $\su$-invariant cocycle, the \emph{Bergmann cocycle}, that corresponds to the bounded K\"{a}hler class through the canonical map 
$\Hm^2(\calB^{\infty}((\calS)^{\bullet+1};\matR))\rightarrow \Hcb^2(\su;\matR)$ defined by Burger and Iozzi \cite{burger:articolo}.
Hence we can apply the theory of pullback along boundary maps getting a class in $\Hm^2(\Linf((\partial_{\infty}\matH)^{\bullet+1};\matR)^{\Gamma})\cong \Hb(\Gamma;\matR)$ that coincides with the pullback of $k_{\su}^b$. 

Even though the definition of Toledo invariant for measurable cocycles $\Gamma \times X \rightarrow \su$ can be given without boundary maps, those maps are fundamental in order to adapt the proof by Zimmer. For this reason our first result, which is given in a much more general setting, investigates the existence of boundary maps.
In the following statement a $\Gamma$-boundary is an amenable $\Gamma$-space which is relatively metrically ergodic (see Definition \ref{def:rel:iso:ergodic})

\begin{intro_thm}\label{thm_boundary_map}
Let $\Gamma$ be a locally compact and second countable group and let $H$ be a simple Lie group of non-compact type. Let $(X,\mu_X)$ be an ergodic standard Borel probability $\Gamma$-space and let $\sigma:\Gamma\times X \rightarrow H$ be a Zariski dense measurable cocycle. Then, for any $\Gamma$-boundary $B$ there exists a $\sigma$-equivariant map $\phi:B\times X\rightarrow H/P$ where $P<H$ is a minimal parabolic subgroup.
\end{intro_thm}

The proof of the above result relies on the techniques by Bader and Furman (\cite{BFalgebraic}) concerning the category of algebraic representations of ergodic spaces. A fundamental tool will be the notion of relative metric ergodicity, that we recall in Section \ref{section_relative_isometric_ergodic}. Here the assumption of Zariski density of $\sigma$ refers to the fact that the algebraic hull coincides with the whole target group. 

As a consequence of Theorem \ref{thm_boundary_map}, given a torsion-free lattice $\Gamma < \pu$ and an ergodic standard Borel probability $\Gamma$-space $(X,\mu_X)$, any Zariski dense measurable cocycle $\sigma:\Gamma \times X \rightarrow \su$ admits a boundary map $\phi:\partial_\infty \mathbb{H}^p_{\mathbb{C}} \times X \rightarrow \calS$. We are going to prove even more. Indeed, exploiting the ergodicity of the space $X$, we will show that the slice $\phi_x(\xi):=\phi(\xi,x)$ is \emph{essentially Zariski dense} for almost every $x \in X$ (see Proposition \ref{prop:zariski:density}). 

Finally notice that Theorem \ref{thm_boundary_map} has important consequences also on the work of one of the author \cite{savini2020}, answering to the question of \cite[Section 4.1]{savini2020} about the existence of boundary maps for Zariski dense measurable cocycles of surface groups.

Thanks to the existence of a boundary map, we are going to prove the following superrigidity result.

\begin{intro_thm}\label{main_theorem}
Consider $p \geq 2$ and $1 \leq m \leq n$. Let $\Gamma<\pu$ be a torsion-free lattice and let $(X,\mu_X)$ be an ergodic standard Borel probability $\Gamma$-space. If $\sigma: \Gamma \times X\rightarrow \su$ is a maximal Zariski dense measurable cocycle, then it is cohomologous to the restriction of a cocycle associated to a representation $\rho: \pu \rightarrow \su$.
\end{intro_thm}

In the particular case when $m=1$, we get back \cite[Theorem 3]{moraschini:savini:2} for Zariski dense cocycles. We also point out that, under the Zariski density assumption, Theorem \ref{main_theorem} does not follow by Zimmer' Superrigidity Theorem \cite[Theorem 4.1]{zimmer:annals} since the rank of $\pu$ is one. So our result should be considered as a suitable adaptation of Zimmer's theorem in the context of rank-one lattices, where to get the desired trivialization one needs to introduce the maximality assumption. 

It is worth noticing that it was conjectured that maximal representations of a complex hyperbolic lattice into a Hermitian Lie group are superrigid. Theorem \ref{main_theorem} suggests that a similar conjecture should hold also for measurable cocycles.


As an easy application of the results by Pozzetti, we also obtain the following

\begin{intro_prop}\label{corollary}
Consider $p \geq 2$. Let $\Gamma<\pu$ be a torsion-free lattice and let $(X,\mu_X)$ be an ergodic standard Borel probability $\Gamma$-space. Assuming $1<m<n$, there is no maximal Zariski dense measurable cocycle $\sigma:\Gamma \times X\rightarrow\su$. 
\end{intro_prop}

The proof of Theorem \ref{main_theorem} is not a mere adaptation of Pozzetti's proof. We need an additional study of the slices of the boundary map. Indeed, since the slice $\phi_x$ is measurable for almost every $x \in X$ \cite[Chapter VII, Lemma 1.3]{margulis:libro}, the maximality of $\sigma$ implies that the slice $\phi_x$ preserves the chain geometry for almost every $x \in X$. Being essentially Zariski dense by Proposition \ref{prop:zariski:density}, by \cite[Theorem 1.6]{Pozzetti} we argue that $\phi_x$ is a rational map for almost every $x \in X$. This gives us back a measurable map $\Phi:X \rightarrow \textup{Rat}(\partial\matH,\calS) , \  \Phi(x):=\phi_x$. Thus following the line of the proof of \cite[Theorem 4.1]{zimmer:annals}, we exploit the ergodicity of $\Gamma$ on $X$ and the smoothness of the action of $\pu \times \su$ on $\textup{Rat}(\partial\matH,\calS)$ to get the desired statement. In the proof we will exploit the fact that stabilizers of measures on quotients of $\pu$ by a closed subgroup are algebraic (see Lemma \ref{lemma_measure}). 

\vspace{ 0.5 cm}
\paragraph{\textbf{Plan of the paper.}} The paper is divided into four sections. In Section \ref{section_preliminaries} we recall some preliminary definitions and known results. More precisely, we start by giving the definition of measurable cocycle, we remark how it extends the notion of representation and we generalize conjugation by defining what cohomologous cocycles are. Afterwards, we focus on bounded cohomology of locally compact groups, recalling basic definitions and Burger-Monod's functorial approach which provides a useful technique for the computation. Then we move to the definition of two specific cohomology classes, the \emph{K\"{a}hler class} and the \emph{Cartan class}. Finally we remind the notion of smooth, ergodic and amenable actions. We conclude with the notions of relative metric ergodicity and boundaries. 

The aim of Section \ref{section_toledo} is to introduce the \emph{Toledo invariant} of a measurable cocycle and to prove some of its properties. Using boundary maps to implement the pullback in bounded cohomology and applying the transfer map, we are allowed to compare the pullback of the K\"{a}hler class with the Cartan class. The real number obtained by such a  comparison will be our desired invariant. Then we prove that the module of such invariant is bounded by the rank $\rk (\mathcal{X}_{m,n})$ of the symmetric space associated to $\su$.

In Section \ref{section_boundary_map} we prove Theorem \ref{thm_boundary_map} and Proposition \ref{prop:zariski:density}, where we investigate the relation between the Zariski density of the cocycle and the essential Zariski density of the slices of the boundary map.

Section \ref{section_proof} is spent to prove the Theorem \ref{main_theorem} and Proposition \ref{corollary}.

\vspace{ 0.5 cm}
\paragraph{\textbf{Acknowledgements.}} We would like to thank Maria Beatrice Pozzetti, Jonas Beyer, Alessandra Iozzi, Michelle Bucher and Stefano Francaviglia for the enlightening conversations and the useful comments about our work. 

We want to thank also Bruno Duchesne for the interest he showed in this result. 

We are finally grateful to the anonymous referee for her/his suggestions which allowed us to improve the quality of our paper.

\section{Preliminaries}\label{section_preliminaries}

\subsection{Measurable cocycles}\label{section_cocycles}

Let $G$ and $H$ be locally compact groups endowed with their Haar measurable structures. Let
$(X,\mu_X)$ be a standard Borel probability space equipped with a measure preserving $G$-action. Additionally, suppose that $\mu_X$ is atom-free.
Under those assumptions, we say that $(X,\mu_X)$ is a \emph{standard Borel probability $G$-space}.  If the $G$-action preserves only the measure class of $\mu_X$, we are going to call $(X,\mu_X)$ a \emph{Lebesgue $G$-space}. 

Given another measure space $(Y,\mu_Y)$, we denote by $\Meas (X,Y)$ the \emph{space of measurable maps} from $X$ to $Y$, where we identify two measurable functions if they coincide 
up to a measure zero subset. We endow $\Meas(X,Y)$ with the topology of convergence in measure. In the previous setting we can give the following

\begin{defn}\label{cocycle_definition}
 A \emph{measurable cocycle} is a measurable function $\sigma: G\times X\rightarrow H$ which satisfies the cocycle condition 
 \begin{equation}\label{cochain_condition}
 \sigma(g_1g_2,x)=\sigma(g_1,g_2x)\sigma(g_2,x)
 \end{equation}
 holds for almost every $g_1,g_2\in G$ and for almost every $x\in X$. 
\end{defn}

The cocycle condition \eqref{cochain_condition} can be suitably interpreted as a generalization of the chain rule for differentiation. Moreover, it is equivalent to the equation defining Borel 1-cocycles of $\textup{Meas}(G,\textup{Meas}(X,H))$ in the sense of Eilenberg-MacLane (see for instance \cite{feldman:moore}).

The notion of measurable cocycle is quite ubiquitous in mathematics, but for our purposes we will mainly focus our attention on how measurable cocycles extend the concept of representations. More precisely, given a representation one can naturally define a cocycle associated to it as follows.

\begin{defn}\label{def_induced_cocycle}
 Let $\rho:G\rightarrow H$ be a continuous representation and let $(X,\mu_X)$ be a standard Borel probability
 $G$-space. The \emph{cocycle associated to} $\rho$ is defined by
 $$\sigma_{\rho}(g,x)\coloneqq\rho(g)$$
 for every $g\in G$ and for almost every $x\in X$.
\end{defn}

Notice that when $\Gamma$ is a lattice, it naturally inherits the 
discrete topology from the ambient group and hence any representation is automatically continuous. 

By following the interpretation of measurable cocycles as Borel $1$-cocycles, we 
now introduce the notion of cohomologous cocycles. 

\begin{defn}\label{def_cohomologous_cocycle}
 Let $\sigma_1,\sigma_2:G\times X \rightarrow H$ be two measurable cocycles, let 
 $f:X\rightarrow H$ be a measurable map and denote by $\sigma_1^f$ the cocycle defined as
 $$\sigma_1^f(g,x)\coloneqq f(gx)^{-1}\sigma_1(g,x)f(x)$$
 for every $g\in G$ and almost every $x\in X$. The cocycle $\sigma^f_1$ is the \emph{$f$-twisted cocycle associated to $\sigma_1$}. 
 We say that $\sigma_1$ is \emph{cohomologous} to $\sigma_2$ if there exists a measurable map $f$
 such that $\sigma_2=\sigma_1^f$.
\end{defn}
As well measurable cocycles may be interpreted as a generalization of representations, so the notion of cohomologous cocycles actually extends conjugacy between representations.

An important tool that may encode useful information about a given representation is the closure of its image. One would like to introduce a similar notion in the context of measurable cocycles. Unfortunately this attempt may reveal quite difficult since a priori the image of a cocycle does not have any nice algebraic property like a group structure. Nevertheless, when the target is an algebraic group, it is possible to introduce the notion of algebraic hull.

\begin{defn}
Let \textbf{H} be a semisimple real algebraic group and denote by $H=\mathbf{H}(\mathbb{R})$ the real points of $\mathbf{H}$. The \emph{algebraic hull} of a measurable cocycle $\sigma: G \times X \rightarrow H$ is the (conjugacy class of
the) smallest algebraic subgroup $\textbf{L}$ of $\textbf{H}$ such that $\textbf{L}(\matR)^{\circ}$ contains the image of a
cocycle cohomologous to $\sigma$.

We say that $\sigma$ is \emph{Zariski dense} if its algebraic hull coincides with the whole group $\mathbf{H}$. 
\end{defn}

The fact that the notion of algebraic hull is well-defined follows by the descending chain condition in $\textbf{H}$, which is algebraic and hence Noetherian \cite[Proposition 9.2.1]{zimmer:libro}. Notice that by \cite[Proposition 3.1.6]{zimmer:libro} simple Lie groups are real points of algebraic groups, thus it makes sense to speak about Zariski density in that case. This should clarify the assumption of Theorem \ref{thm_boundary_map}

%
%

\subsection{Continuous bounded cohomology}\label{section_cohomology}

In this section we are going to recall both the definitions of continuous and continuous bounded cohomology for locally compact groups. We refer to \cite{monod:libro,burger2:articolo} for more details. 

We define the sets of \emph{real continuous functions on} $G^{\bullet+1}$ as
$$\calC_c^{\bullet}(G;\mathbb{R})\coloneqq \{f:G^{\bullet+1}\rightarrow \mathbb{R} \ | \  f \text{ continuous}\}$$
and the maps
$$\delta^{\bullet}:\calC_c^{\bullet}(G;\mathbb{R})\rightarrow\calC_c^{\bullet+1}(G;\mathbb{R})$$
where
$$\delta^{\bullet}(f)(g_0,\cdots,g_{\bullet+1})\coloneqq\sum\limits_{i=0}^{\bullet+1} (-1)^if(g_0,\cdots,g_{i-1},g_{i+1},\cdots,g_{\bullet+1}) \ .$$
There exists a natural $G$-action on $\calC_c^{\bullet}(G;\mathbb{R})$ defined by
\begin{equation}\label{eq_action}
(gf)(g_0,\cdots,g_{\bullet})\coloneqq f(g^{-1}g_0,\cdots,g^{-1}g_{\bullet}) 
\end{equation}
for every $f\in\calC_c^{\bullet}(G;\mathbb{R})$ and for every $g,g_0,\ldots,g_{\bullet}\in G$.
Hence, if we consider the sets of $G$-invariant cochains
$$\calC_c^{\bullet}(G;\mathbb{R})^G\coloneqq \{f\in \calC_c^{\bullet}(G;\mathbb{R}) \ | \  gf=f\} \ ,$$
the restriction $\delta^{\bullet}_|$ of the coboundary operator $\delta^{\bullet}$ is well-defined, since it preserves $G$-invariance. The pair 
$$(\calC_c^{\bullet}(G;\mathbb{R})^G,\delta^{\bullet}_|)$$
is the \emph{cochain complex of real-valued invariant continuous functions on $G$}.
\begin{defn}
 The \emph{continuous cohomology} of the group $G$ with real coefficients is the 
 cohomology of the complex $(\calC_c^{\bullet}(G;\mathbb{R})^G,\delta^{\bullet}_|)$ and it is denoted by $\Hc^{\bullet}(G;\mathbb{R})$.
\end{defn}
Similarly, we can consider the subspace of continuous bounded cochains, that is the cochain complex given by
$$(\calC_{cb}^{\bullet}(G;\mathbb{R})^G,\delta^{\bullet}_|)$$
where
$$\calC_{cb}^{\bullet}(G;\mathbb{R})^G\coloneqq\left\{f\in \calC_c^{\bullet}(G;\mathbb{R})^G \ | \ \sup_{g_0,\cdots,g_{\bullet}} |f(g_0,\cdots,g_{\bullet})|<+\infty\right\}$$
and the coboundary is obtained by restriction, since it preserves boundedness.

\begin{defn}
The \emph{continuous bounded cohomology} of the group $G$ with real coefficients is the 
 cohomology of the complex $(\calC_{cb}^{\bullet}(G;\mathbb{R})^G,\delta^{\bullet}_|)$ and it is denoted by $\Hcb^{\bullet}(G;\mathbb{R})$. 
\end{defn}


It is worth mentioning that the space $\Hcb^\bullet(G;\mathbb{R})$ admits a standard seminormed structure. Indeed, given an element $\alpha\in \Hcb^{\bullet}(G;\mathbb{R})$,
we can define its seminorm as follows
$$\lVert \alpha \rVert_\infty \coloneqq \inf \{||c||_{\infty} \ | \ [c]=\alpha \}\ . $$
In this way $\Hcb^{\bullet}(G;\mathbb{R})$ becomes a seminormed space with the quotient seminorm.

The computation of continuous bounded cohomology groups may reveal quite complicated using the standard definition. Burger and Monod \cite{monod:libro,burger2:articolo} circumvent this problem showing a way to compute continuous bounded cohomology with the use of strong resolutions by relatively injective $G$-modules. Since it would be too technical to introduce those notions here, we prefer to omit them and we refer to Monod's book \cite{monod:libro} for a more detailed exposition. The result by Burger and Monod \cite[Corollary 1.5.3.]{burger2:articolo} shows that given a locally compact group $G$ and a strong resolution of $\mathbb{R}$
 by relatively injective Banach $G$-modules $(E^{\bullet},\delta^{\bullet})$, there exists an isomorphism 
$$\Hcb^k (G;\mathbb{R})\cong \textup{H}^k((E^{\bullet})^G) \ ,$$
for every $k \in \mathbb{N}$. Here $(E^\bullet)^G$ is the subcomplex of $G$-invariant vectors in $E^\bullet$. Unfortunately the isomorphism above is not isometric a priori, in the sense that it does not necessarily preserve seminorms.

We will spend the rest of the section to define a strong resolution which actually realizes the isomorphism isometrically. Let $(S,\mu)$ be an amenable $G$-space (we refer the reader to Definition \ref{amenable_action}). The cochain complex of \emph{essentially bounded measurable functions on $S^{\bullet+1}$} is $(\Linf (S^{\bullet+1};\mathbb{R}),\delta^{\bullet})$, where $\delta^\bullet$ is the standard homogeneous coboundary operator. If we consider the $G$-action defined by Equation \eqref{eq_action} and we complete the previous complex with the inclusion of coefficients $\mathbb{R} \hookrightarrow \Linf(S;\mathbb{R})$, we obtain a resolution of $\mathbb{R}$ that is strong and consists of relatively injective $G$-modules \cite[Theorem 1]{burger2:articolo}. Hence, by \cite[Corollary 1.5.3]{burger2:articolo}, we have the following isomorphism
$$\Hcb^k (G;\mathbb{R})\cong \textup{H}^k (\Linf(S^{\bullet+1};\mathbb{R})^G) \ ,$$
for every $k \in \mathbb{N}$. The striking result is that this isomorphism is in fact isometric \cite[Theorem 2]{burger2:articolo}.

Even if $S$ is not amenable, it is sufficient to consider the complex $(\mathcal{B}^\infty(S^{\bullet+1};\mathbb{R}),\delta^\bullet)$ of \emph{bounded measurable functions on $S^{\bullet+1}$} to gain information about the continuous bounded cohomology of $G$. Indeed with the $G$-action defined by Equation \eqref{eq_action} and the inclusion $\mathbb{R} \hookrightarrow \mathcal{B}^\infty(S;\mathbb{R})$, we obtain a resolution of $\mathbb{R}$ which is strong (but not necessarily by relatively injective modules). By Burger and Iozzi \cite[Corollary 2.2]{burger:articolo} there exists a canonical map 
$$
\mathfrak{c}^k:\textup{H}^k(\mathcal{B}^\infty(S^{\bullet+1};\mathbb{R})^G) \rightarrow \Hcb^k(G;\mathbb{R}) 
$$
for every $k \in \mathbb{N}$. We will tacitly exploit the previous result to ensure that the pullback of measurable cochain along boundary maps lies in $\Linf$.

\subsection{Cartan and K\"{a}hler classes}\label{section_bergmann_cartan}

In this section we are going to define two bounded measurable cocycles that will be the main ingredients
for the definition of the Toledo invariant associated to a measurable cocycle. 

Let $1 \leq m \leq n$. Let $\su$ be the subgroup of $\textup{SL}(n+m,\matC)$ which preserves the Hermitian form defined by the following matrix 
\begin{equation*}
 h=\begin{bmatrix}
  \textup{Id}_m & \\
  &-\textup{Id}_{n}
 \end{bmatrix} \ ,
\end{equation*}
where the matrices appearing above are the identity matrix of order given by the subscript. Denote by $\mathcal{X}_{m,n}$ the associated symmetric space. It is well-known that the latter is a \emph{Hermitian} symmetric space, that is it admits a $\su$-invariant complex structure. Additionally, when $m=n$, the Hermitian space is \emph{of tube type}, namely it can be biholomorphically realized as a domain of the form $V+i \Omega$, where $V$ is a real vector space and $\Omega \subseteq V$ is a proper convex cone. More generally $\mathcal{X}_{m,n}$ contains maximal tube type subdomains which are all isometric to the symmetric space $\mathcal{X}_{m,m}$ assuming $1 \leq m \leq n$. 

The space $\mathcal{X}_{m,n}$ can also be realized biholomorphically as a bounded convex subspace of $\matC^n$. In that case, $\su$ acts via biholomorphisms and this action extends continuously to the boundary $\partial \mathcal{X}_{m,n}$, which is not a homogeneous $\su$-space but it admits a unique closed $\su$-orbit called \emph{Shilov boundary}. The Shilov boundary $\calS$ is the smallest closed subset on which one can apply the maximum principle and it can be identified with the quotient $\su/Q$, where $Q$ is a \emph{maximal} parabolic subgroup stabilizing a maximal isotropic subspace of $\mathbb{C}^{m+n}$. Notice that when $1 \leq m \leq n$, the Shilov boundaries of maximal tube type subdomains of $\mathcal{X}_{m,n}$ naturally embed into $\calS$. Those boundaries are called \emph{$m$-chains}. A measurable map $\varphi:\partial_\infty \mathbb{H}^p_{\mathbb{C}} \rightarrow \calS$ will be \emph{chain-preserving} if it sends chains to chains. 

If $\omega\in \Omega^2(\mathcal{X}_{m,n})^{\su}$ is the \emph{K\"{a}hler form} of $\mathcal{X}_{m,n}$, denoting by $\mathcal{X}^{(3)}_{m,n}$ the set of distinct triples of points in $\mathcal{X}_{m,n}$, we can define
the following function
\begin{align*}
\begin{array}{cccc}
\beta : & \mathcal{X}_{m,n}^{(3)} & \longrightarrow & \matR \\
 &(x,y,z) & \longmapsto & \frac{1}{\pi}\int_{\Delta(x,y,z)}  \omega
\end{array}
\end{align*}
where $\Delta(x,y,z)$ is any triangle with vertices $x,y,z$ and geodesic edges. 
Since $\omega$ is $\su$-invariant, it is closed by Cartan's Lemma \cite[Lemma VII.4]{Hel01}, and using Stokes' Theorem we get that $\beta$ is a well-defined $\su$-invariant bounded cocycle. Clerc and \O ersted \cite{CO} proved that it is possible to extend $\beta$ to triple of points of the Shilov boundary $\calS$ in a measurable way getting a map $\beta_{\calS}$ called \emph{Bergmann cocycle}. In this way we obtain a bounded $\su$-invariant measurable cocycle, that is $\beta_{\calS} \in \mathcal{B}^\infty((\calS)^3;\matR)^{\su}$, and hence by Section \ref{section_cohomology} we obtain a class in $\Hcb^2(\su;\matR)$. 

\begin{defn}
The bounded $\su$-invariant measurable cocycle $\beta_{\calS}$ is the \emph{Bergmann cocycle} and the class determined by the Bergmann cocycle in $\Hcb^2(\su;\matR)$ is called \emph{K\"{a}hler class}. 
\end{defn}

We recall some properties of the Bergmann cocycle listed in \cite[Proposition 2.1]{Pozzetti} and that we will use later in the proof of the main theorem. The following hold
\begin{itemize}
 \item[\textit{(1)}] The map $\beta_{\calS}$ is an alternating cocycle defined everywhere such that
 $$|\beta_{\calS}(\xi_0,\xi_1,\xi_2)|\leq \textup{rk}(\mathcal{X}_{m,n})$$
 for every $\xi_0,\xi_1,\xi_2$ in $\calS$;
 \item[\textit{(2)}]  The map $\beta_{\calS}$ attains its maximum only on triples of distinct points lying on $m$-chains. Such triples are called \emph{maximal}.
\end{itemize}

In the case $m=p$ and $n=1$ the Bergmann cocycle boils down to the Cartan angular invariant $c_p$ (see \cite{Goldmancomplex}) and we will call the associated cohomology class the \emph{Cartan class}.

\subsection{Ergodic, smooth and amenable actions}\label{section_actions}
In this section we are going to recall the definitions of ergodic, smooth and amenable actions. 
These notions will be crucial in the proof of Theorem \ref{main_theorem}. We refer the reader to \cite{zimmer:libro}, where all those definitions are discussed with more details.

In order to define both smooth and ergodic actions we first need to introduce the notion of countably separated space.

\begin{defn}\label{separable_definition}
 A Borel space $(X,\calB)$ is \emph{countably separated} if there exists a countable family of Borel sets
 $\{B_j\}_{j \in J}$ that separate points.
\end{defn}
A relevant example of countably separated space is the quotient space of an algebraic variety defined over a local field of characteristic zero by an algebraic action of an algebraic group. This is a consequence of  \cite[Theorem 2.1.14]{zimmer:libro} together with \cite[Proposition 3.1.3]{zimmer:libro}. 

Using the notion of countably separated space we are ready to define the concept of smooth action.

\begin{defn}\label{smooth_definition}
 Let $(X,\calB)$ be a countably separated $G$-space. The action is called \emph{smooth} if the quotient Borel structure on $X/G$ is countably
 separated.
\end{defn}

Smooth actions are crucial in the study of boundary theory. Indeed one of the key point of the proofs of both Margulis \cite{margulis:super} and Zimmer \cite{zimmer:annals} superrigidity results relies on the smoothness of the action of product groups on the set of rational functions between boundaries. To be more precise, we are going to give an explicit example in our context. Denote by $G=\pu$ and by $H=\su$. Recall that $G$ can be seen as the real points of its complexification $\textbf{G}=\textup{PSL}(p+1,\mathbb{C})$ once we have suitably fixed
a real structure on it. A similar thing holds for $H$ and its complexification $\mathbf{H}=\textup{SL}(m+n,\mathbb{C})$. Additionally, there exist parabolic subgroups $\mathbf{P} < \mathbf{G}$ and $\mathbf{Q} < \mathbf{H}$ for which $\partial \mathbb{H}^p_{\mathbb{C}}=(\mathbf{G}/\mathbf{P})(\mathbb{R})$ and $\mathcal{S}_{m,n}=(\mathbf{H}/\mathbf{Q})(\mathbb{R})$. We say that a map between $\partial\matH$ and $\calS$ is \emph{rational}, if it is the restriction of a rational map between $\mathbf{G}/\mathbf{P}$ and $\mathbf{H}/\mathbf{Q}$. This enables us to speak about the set $\calQ\coloneqq\textup{Rat}(\partial\matH,\calS)$ of rational maps between $\partial\matH$ and $\calS$. It is possible to define a joint action of $G$ and $H$ as follows
$$((g,h)\cdot f)) (\xi)\coloneqq h\cdot f(g^{-1} \xi) \ ,$$
for each $g \in G$, $h\in H$ and $f\in \calQ$. Following \cite[Proposition 3.3.2]{zimmer:libro} we have that the actions of $G$, $H$ and $G\times H$ on $\calQ$ are all smooth. 

We now move on to the definition of ergodic actions. 
\begin{defn}
 Let $G$ be a locally compact second countable group and let $(X,\mu)$ be a Borel probability $G$-space.
 The action is \emph{ergodic} if for every $G$-invariant Borel set $A$ we have either $\mu(A)=0$ or $\mu(X\setminus A)=0$.
 \end{defn}

Ergodicity can be translated in terms of measurable invariant functions. Indeed an action of $G$ on $X$ is ergodic if and only if for every countably separated space $Y$, every $G$-invariant map $f:X\rightarrow Y$ is essentially constant \cite[Proposition 2.1.11]{zimmer:libro}. 

We conclude this brief section by recalling the notion of amenable spaces.
%
%
%

\begin{defn}\label{amenable_action}
Let $G$ be a locally compact second countable group. Let $(S,\mu)$ be a Lebesgue $G$-space. A \emph{mean} on $\Linf (G\times S;\matR)$ is a $G$-equivariant $\Linf(S;\matR)$-linear operator 
$$m: \Linf (G\times S;\matR)\rightarrow \Linf (S;\matR) \ , $$ 
which has norm one, it is positive and it satisfies $m(\chi_{G \times S})=\chi_S$. An action of $G$ on $S$ is \emph{amenable}, or equivalently $S$ is an \emph{amenable $G$-space}, if there exists a mean on $\Linf (G\times S;\matR)$. 
\end{defn}

Actions determined by amenable groups are amenable, but more generally one can characterize the amenability of a group using actions. Indeed any group acting amenably on a space with finite invariant measure is amenable \cite[Proposition 4.3.3]{zimmer:libro}. The crucial property that we are going to use later is given by the fact that, given a closed subgroup $H \leq G$, then $H$ is amenable if and only if the $G$-action of the quotient space $G/H$ is amenable \cite[Proposition 4.3.2]{zimmer:libro}. In particular, given a semisimple Lie group $G$ of non-compact type, since any minimal parabolic subgroup $P \leq G$ is amenable, the quotient $G/P$ must be an amenable $G$-space.

\subsection{Relative metric ergodicity, boundaries and boundary maps}\label{section_relative_isometric_ergodic}
The goal of this section is to introduce the notion of boundary for a generic locally compact and second countable group $\Gamma$. This definition is due to Bader and Furman and we refer to \cite{BF14} for further details. We point out that in \cite{BF14} the authors first provide the more general definition of boundary pairs and then the one of $\Gamma$-boundary, but here we will not deal with pairs. The objects that we introduce in this section are the main characters of Theorem \ref{thm_boundary_map} and of Section \ref{section_boundary_map}.

We start with the following first refinement of the notion of ergodic space.
\begin{defn}
A Lebesgue $\Gamma$-space $(X,\mu_X)$ is \emph{metrically ergodic} if for any isometric action $\Gamma\rightarrow \Isom(M,d)$ on a separable metric space $(M,d)$, any $\Gamma$-equivariant measurable map $X\rightarrow M$ is essentially constant. 
\end{defn}
We note that a metrically ergodic action is actually ergodic, taking as $M$ the space $\{0,1\}$ with the trivial $\Gamma$-action. For our goal, we need a further refinement of metric ergodicity. Before doing that, we give the definition of relative $\Gamma$-isometric action.
\begin{defn}
A metric on a Borel function $p:M\rightarrow T$ between standard Borel probability spaces is a function $d:M\times_T M \rightarrow [0,\infty)$ whose restriction $d_{|p^{-1}(t)}$ on each $p$-fiber is a separable metric.

Given a metric on $p:M \rightarrow T$, an action of $\Gamma$ on $M$ is \emph{fiber-wise isometric} if there exists a $p$-compatible $\Gamma$-action on $T$ such that, for any $t\in T, \;x,y\in p^{-1}(t)$ and $\gamma\in \Gamma$ we have
$$d_{|p^{-1}(\gamma t)}(\gamma x,\gamma y)=d_{|p^{-1}(t)}(x,y).$$
\end{defn}
The notion of fiberwise isometric action allows us to introduce the following  
\begin{defn}\label{def:rel:iso:ergodic}
A map $q:X\rightarrow Y$ between Lebesgue $\Gamma$-space is \emph{relatively metrically ergodic} if for any fiber-wise isometric $\Gamma$-action on $p:M\rightarrow T$ and measurable $\Gamma$-equivariant maps $f:X\rightarrow M$ and $g:Y\rightarrow T$ there exists a measurable $\Gamma$-equivariant map $\psi:Y\rightarrow M$ such that the following diagram commutes
\begin{center}
\begin{tikzcd}
X \arrow{r}{f}\arrow{d}{q}     & M \arrow{d}{p}   \\
Y \arrow{r}{g}\arrow[dotted]{ru}{\psi}    & T.
\end{tikzcd}
\end{center}
\end{defn}
It is worth noticing that relative metric ergodicity boils down to metric ergodicity if we consider the trivial projection $q:X \rightarrow\{ \ast \}$ on a point.

We finally have all the needed tools to give the notion of boundary. 
\begin{defn}\label{definition_boundary}
A $\Gamma$-\emph{boundary} is a Lebesgue $\Gamma$-space $B$ such that the projections $\textup{pr}_1:B\times B\rightarrow B$ and $\textup{pr}_2:B\times B\rightarrow B$ on the two factors are relative metric ergodic.
\end{defn}

\begin{oss}
As observed in \cite[Remarks 2.4]{BF14} a $\Gamma$-boundary in the sense of Definition \ref{definition_boundary} is a \emph{strong $\Gamma$-boundary} in the sense of Burger and Monod \cite{burger2:articolo}.
\end{oss}

Bader and Furman \cite[Theorem 2.3]{BF14} proved that for any lattice $\Gamma$ in a connected semi-simple Lie group $G$ of non-compact type, the quotient $G/P$ is a $\Gamma$-boundary, where $P$ is a minimal parabolic subgroup. This leads naturally to the following 

\begin{defn}
Let $\Gamma <G$ be a torsion-free lattice in a semi-simple Lie group of non-compact type and let $H$ be a locally compact group. Consider a standard Borel probability $\Gamma$-space $(X,\mu_X)$ and a Lebesgue $H$-space $(Y,\nu)$. A \emph{boundary map} for a measurable cocycle $\sigma:\Gamma \times X \rightarrow H$ is a measurable map 
$$
\phi:G/P \times X \rightarrow Y \ ,
$$
which is \emph{$\sigma$-equivariant}, that is
$$
\phi(\gamma \xi,\gamma x)=\sigma(\gamma,x)\phi(\xi,x) \ ,
$$
for all $\gamma \in \Gamma$ and almost every $\xi \in G/P, x \in X$. 
\end{defn}

\begin{oss}\label{remark_boundary}
We now compare the notion of boundary map with the ones of Section \ref{section_cocycles}.
\begin{itemize}
\item[(i)] If $\rho:G \rightarrow H$ is a representation and $\sigma=\sigma_{\rho}$ is the induced cocycle as in Definition \ref{def_induced_cocycle}, a $\rho$-equivariant map $\varphi:G/P\rightarrow Y$ naturally defines a $\sigma_{\rho}$-equivariant map $\phi:G/P\times X\rightarrow Y$ as 
$$\phi(\xi,x)\coloneqq\phi(\xi)$$
for every $\xi\in G/P$ and $x\in X$.
\item[(ii)] If $\phi:G/P\times X\rightarrow Y$ is a boundary map for a cocycle $\sigma: G\times X\rightarrow H$ and $f:X	\rightarrow G$ is a measurable function,  the map $\phi^f:G/P\times X\rightarrow Y$ defined as 
$$\phi^f(\xi,x)\coloneqq f(x)^{-1}\phi(\xi,x)$$ is a boundary map for the twisted cocycle $\sigma^f$ introduced in Definition \ref{def_cohomologous_cocycle}.
\end{itemize}

\end{oss}

The existence of boundary maps for Zariski dense cocycles will be a crucial result in the proof of our main theorem.

\section{Toledo invariant associated to a cocycle}\label{section_toledo}
In this section we define the Toledo invariant associated to a measurable cocycle,
generalizing the standard definition given for representations. 
Firstly, we do not use boundary maps, as done by the second author in \cite{savini2020} and then we follow the approach adopted by the second author and Moraschini
in \cite{savini3:articolo,moraschini:savini,moraschini:savini:2,savini2020} for the definition of multiplicative constants. Indeed the Toledo invariant will be a particular case of multiplicative constant in the sense of \cite{moraschini:savini:2}. 

We are going to fix the following 
\begin{setup}\label{setup}
 Consider $p \geq 2$ and $1 \leq m \leq n$. We assume that:
	\begin{itemize}
	\item $\Gamma \leq \pu$ is a torsion-free lattice;
	\item $(X,\mu_X)$ is a standard Borel probability $\Gamma$-space;
	\item $\sigma:\Gamma \times X \rightarrow \su$ is a measurable cocycle.
	\end{itemize}
\end{setup}

As proved in \cite[Lemma 2.7]{savini2020}, the map
$$
\textup{C}_{\textup{b}}^\bullet(\sigma):\textup{C}_{\textup{cb}}^\bullet(\su;\mathbb{R})^{\su} \rightarrow \textup{C}_{\textup{b}}^\bullet(\Gamma;\mathbb{R})^{\Gamma} \ ,
$$
$$
\psi\mapsto \textup{C}_{\textup{b}}^\bullet(\sigma)(\gamma_0,\ldots,\gamma_\bullet):=\int_X \psi(\sigma(\gamma_0^{-1},x)^{-1},\ldots,\sigma(\gamma_{\bullet}^{-1},x)^{-1})d\mu_X(x) 
$$
is a well-defined cochain map and hence
induces the following map at a cohomological level
$$
\Hb^\bullet(\sigma): \Hcb^\bullet(\su;\matR) \rightarrow \Hb^\bullet(\Gamma;\matR) \ , 
$$
$$
\Hb^\bullet(\sigma)([\psi]):=[\textup{C}_{\textup{b}}^\bullet(\sigma)(\psi)] \, .
$$
Then we can compose with the \emph{transfer map} associated to $\Gamma$. The latter, denoted by
$$\Tbbullet:\Hb^{\bullet}(\Gamma;\matR)\rightarrow \Hcb^{\bullet}(\pu;\matR) \ ,$$
it is the map induced in cohomology by
$$
\widetilde{\Tbbullet}:\textup{C}_{\textup{b}}^\bullet(\Gamma;\mathbb{R})^{\Gamma}\rightarrow\textup{C}_{\textup{cb}}^\bullet(\pu;\mathbb{R})^{\pu}\; ,
$$
$$
(\widetilde{\Tbbullet}\psi)(\gamma_0,\ldots,\gamma_{\bullet}) \coloneqq\int_{\Gamma\setminus\pu}  \psi (\overline{g}\gamma_0,\ldots,\overline{g}\gamma_{\bullet})d\mu(\overline{g}) \, .
$$
Here $\mu$ is the probability measure induced on the quotient by the Haar measure on $\pu$ (see \cite{bucher2:articolo,moraschini:savini,moraschini:savini:2} for more details about the transfer map).

Since $\Hcb^2(\pu;\matR) \cong \matR$ and it is generated by the Cartan class, we can give the following
\begin{defn}
In the situation of Setup \ref{setup}, the \emph{Toledo invariant associated to} $\sigma$ is the real number $\textup{t}_b({\sigma})$ satisfying 
\begin{equation} \label{toledo_equation}
\Tbdue(\Hb^2(\sigma)(k^b_{\su}))=\textup{t}_b(\sigma) [c_p] \ ,
\end{equation}
where $k^b_{\su}$ is the K\"{a}hler class of $\su$ and $[c_p]$ is the Cartan class.
\end{defn}  

If $\sigma$ admits a boundary map $\phi:G/P\times X\rightarrow \calS$, on can implement the pullback in an alternative way.
Thanks to \cite[Lemma 4.2]{moraschini:savini} we know that the map defined as 
$$
\textup{C}^\bullet(\Phi^X):\mathcal{B}^\infty((\calS)^{\bullet+1};\mathbb{R})^{\su} \rightarrow \textup{L}^\infty((\partial_\infty \mathbb{H}^p_{\mathbb{C}})^{\bullet+1};\mathbb{R})^\Gamma \ ,
$$
$$
\psi \mapsto \textup{C}^\bullet(\Phi^X)(\psi)(\xi_0,\ldots,\xi_\bullet):=\int_X \psi(\phi(\xi_0,x),\ldots,\phi(\xi_\bullet,x))d\mu_X(x) \ ,
$$
 induces the following map at a cohomological level
$$
\textup{H}^\bullet(\Phi^X): \textup{H}^\bullet( \mathcal{B}^\infty((\calS)^{\bullet+1};\matR)^{\su}) \rightarrow \Hb^\bullet(\Gamma;\matR) \ , 
$$
$$
\textup{H}^\bullet(\Phi^X)([\psi]):=[\textup{C}^\bullet(\Phi^X)(\psi)] \ .
$$
In degree two, the composition of the above map with the transfer map $\Tbdue$ applied to the class $[\beta_{\calS}]$ induced by the Bergmann cocycle defines a class $\Tbdue(\Hm^2(\Phi^X))([\beta_{\calS}])$ in $\Hm^2(\textup{L}^\infty((\partial_\infty \mathbb{H}^p_{\mathbb{C}})^{\bullet+1};\mathbb{R})^{\pu})\cong \Hcb^2(\pu;\matR)$.
It follows by \cite[Lemma 2.10]{savini2020} that the following diagram commutes
\begin{equation}
\begin{tikzcd}\label{diagram}
\Hm^{\bullet}(\calB^{\infty} ((\calS)^{\bullet+1};\matR)^{\su})\arrow{r}{\Hm^{\bullet}(\Phi^X)}\arrow{d}{\mathfrak{c}^{\bullet}}
& \Hb^{\bullet}(\Gamma;\matR)\arrow[stealth-]{ld}[d]{\Hb^{\bullet}(\sigma)}
\\
\Hcb^{\bullet}(\su;\matR)
\end{tikzcd}
\end{equation}
and hence $\Tbdue(\Hb^2(\sigma))(k_{\su}^b)=\Tbdue(\Hm^2(\Phi^X))([\beta_{\calS}])$

Even if the Toledo invariant is defined independently of a boundary map, we prefer to maintain here its expression in terms of boundary maps since this formulation is crucial in the proof of our superrigidity result.
For our purpose, we want to rewrite Equation \eqref{toledo_equation} that can be at the level of cochains in terms of $\phi$ and $\beta_{\calS}$. Since the transfer map is also induced by the map
$$
\widehat{\Tbbullet}:\Linf((\partial\matH)^{\bullet+1});\matR)^{\Gamma}\rightarrow\Linf((\partial\matH)^{\bullet+1});\matR)^{\pu}\,
$$
$$
(\widehat{\Tbbullet}\psi)(\xi_0,\ldots,\xi_{\bullet}) \coloneqq\int_{\Gamma\setminus\pu}  \psi (\overline{g}\xi,\ldots,\overline{g}\xi_{\bullet})d\mu(\overline{g}) \, ,
$$
the diagram \eqref{diagram} leads to the following formula
\begin{align}\label{formula}
 \int\limits_{\Gamma\backslash \pu}\int\limits_X & \beta_{\calS}(\phi(\overline{g}\xi_0,x),\phi(\overline{g}\xi_1,x),\phi(\overline{g}\xi_2,x))d\mu_X(x)d\mu(\overline{g})=\\
&= \textup{t}_b(\sigma) c_p(\xi_0,\xi_1,\xi_2) \nonumber.
\end{align}
Moreover, as proved for instance in \cite{bucher2:articolo,Pozzetti,BBIborel}, Equation \eqref{formula} holds for every triple $(\xi_0,\xi_1,\xi_2)$ of pairwise distinct points in $(\partial \matH)^3$.

\begin{oss}
It is worth noticing that Equation \eqref{formula} is a suitable adaptation of \cite[Proposition 1]{moraschini:savini:2} to this particular context. The absence of coboundary terms is due to the doubly ergodic action of $\Gamma$ on the boundary $\partial\matH$ and to the fact that all the considered cochains are alternating. Additionally, the Toledo invariant $\textup{t}_b(\sigma)$ is the multiplicative constant $\lambda_{\beta_{\calS},c_p}(\sigma)$ associated to $\sigma, \beta_{\calS}, c_p$, namely
$$
\textup{t}_b(\sigma)=\lambda_{\beta_{\calS},c_p}(\sigma) \ ,
$$
according to \cite[Definition 3.16]{moraschini:savini:2}.
\end{oss}

\begin{prop}\label{toledo_properties}
In the situation of Setup \ref{setup}, the Toledo invariant $\textup{t}_b(\sigma)$ satisfies:
 \begin{enumerate}
  \item[\textit{(1)}] $|\textup{t}_b(\sigma)|\leq \rk(\mathcal{X}_{m,n})$;
  \item[\textit{(2)}] $|\textup{t}_b(\sigma)|=\rk(\mathcal{X}_{m,n})$ if and only if the slice $\phi_x\coloneqq \phi(\ \cdot \ ,x)$ is chain-preserving for almost every $ x\in X$.
 \end{enumerate}
\end{prop}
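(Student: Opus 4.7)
The plan is to extract everything from the explicit integral formula \eqref{formula} together with the two properties of the Bergmann cocycle $\beta_{\calS}$ recorded just before the proposition: the pointwise bound $|\beta_{\calS}| \leq \textup{rk}(\mathcal{X}_{m,n})$, and the fact that this bound is attained exactly on triples lying on $m$-chains.

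For part \textit{(1)}, I would fix a triple $(\xi_0,\xi_1,\xi_2)\in(\partial\matH)^3$ of pairwise distinct points realizing $|c_p(\xi_0,\xi_1,\xi_2)|=1$ (for instance any appropriately oriented triple on a chain, since $c_p$ is precisely the rank-one Bergmann cocycle). Evaluating \eqref{formula} at this triple, taking absolute values, and using that $\mu$ on $\Gamma\backslash \pu$ and $\mu_X$ on $X$ are probability measures, the pointwise bound on $\beta_{\calS}$ immediately yields $|\textup{t}_b(\sigma)|\leq \textup{rk}(\mathcal{X}_{m,n})$.

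For the nontrivial direction of \textit{(2)}, I would start from $|\textup{t}_b(\sigma)|=\textup{rk}(\mathcal{X}_{m,n})$ and again evaluate \eqref{formula} at a triple maximizing $|c_p|$. Combining with the pointwise bound forces the integrand to attain its extreme value almost everywhere, so for $(\mu\otimes\mu_X)$-a.e. $(\overline{g},x)$ one has
\[
|\beta_{\calS}(\phi_x(\overline g\xi_0),\phi_x(\overline g\xi_1),\phi_x(\overline g\xi_2))|=\textup{rk}(\mathcal{X}_{m,n}).
\]
By property \textit{(2)} of $\beta_{\calS}$, the image triple then lies on an $m$-chain almost everywhere. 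A Fubini argument gives, for a.e.\ $x\in X$, that $\phi_x$ sends $\overline{g}$-translates of the fixed chain triple to $m$-chain triples for a.e.\ $\overline{g}\in\Gamma\backslash\pu$. I would then invoke transitivity of $\pu$ on chains in $\partial\matH$ together with transitivity of the stabilizer of a chain on its ordered distinct triples (that stabilizer acting as $\textup{PU}(1,1)$) to conclude that this orbit covers a full-measure set of chain triples. A final elementary step, using that three distinct points of an $m$-chain determine it, upgrades chain-triple preservation to genuine chain-preservation of $\phi_x$. The converse direction is the symmetric computation: chain-preservation of $\phi_x$ makes the integrand in \eqref{formula} extremal in absolute value whenever $(\xi_0,\xi_1,\xi_2)$ is a chain triple, which forces $|\textup{t}_b(\sigma)|=\textup{rk}(\mathcal{X}_{m,n})$ once one verifies that the sign of the integrand is essentially constant.

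The main obstacle is precisely this last sign/coverage issue. Extracting chain-preservation from equality only along the diagonal $\pu$-orbit of a single triple requires that integrating over $\Gamma\backslash\pu$ together with ergodicity of $\mu_X$ produces the almost-everywhere statement on \emph{all} chain triples simultaneously; the same ergodicity is what rules out sign cancellation in the converse direction. I would expect to spell this out by combining the Fubini step with the transitivity of $\pu$ on oriented chain triples and, for the sign, with the essential constancy of $\Gamma$-invariant measurable functions implied by ergodicity.
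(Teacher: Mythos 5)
Your overall strategy coincides with the paper's: both parts are read off from the cochain-level identity \eqref{formula} together with the two listed properties of $\beta_{\calS}$, with the equality case forcing the integrand to be almost everywhere extremal, a Fubini step, and the transitivity of the $\pu$-action on chains to pass from one fixed triple to chain-preservation of the slices; the converse is the same substitution of a chain triple into \eqref{formula}. The only cosmetic difference in part \textit{(1)} is that the paper does not evaluate \eqref{formula} at a special triple but simply uses that the pullback $\textup{C}^2(\Phi^X)$ and the transfer $\widehat{\Tbdue}$ are norm non-increasing, so $|\textup{t}_b(\sigma)|=\lVert \textup{t}_b(\sigma)c_p\rVert_\infty\leq \lVert\beta_{\calS}\rVert_\infty=\rk(\mathcal{X}_{m,n})$; your formula-based bound gives the same conclusion.

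The concrete problem is the step you yourself single out as the crux: you want ergodicity of the $\Gamma$-action on $(X,\mu_X)$ both for the ``coverage'' upgrade and to rule out sign cancellation, but ergodicity is not among the hypotheses of the proposition (Setup \ref{setup} only requires a standard Borel probability $\Gamma$-space), and the paper's proof never uses it. The two issues are resolved differently there. The passage from ``almost every $\overline g\in\Gamma\backslash\pu$'' to ``almost every $g\in\pu$'' (which is what makes transitivity on positively oriented chain triples applicable) comes from the $\sigma$-equivariance of $\phi$ combined with the $\su$-invariance of $\beta_{\calS}$ and the $\Gamma$-invariance of $\mu_X$, not from ergodicity of $X$. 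For the direction ``chain-preserving $\Rightarrow$ maximal'', the paper avoids any sign-constancy argument: it fixes a positively oriented triple on a chain, so $c_p(\xi_0,\xi_1,\xi_2)=1$, and uses that a chain-preserving slice sends it to a maximal triple, i.e. $\beta_{\calS}=+\rk(\mathcal{X}_{m,n})$ exactly (not merely extremal in absolute value), after which integration gives $\textup{t}_b(\sigma)=\rk(\mathcal{X}_{m,n})$; the value $-\rk(\mathcal{X}_{m,n})$ is handled by the symmetric (negatively oriented) argument. As written, your ergodicity-based fix would prove the statement only under an additional hypothesis, so you should replace it by this orientation argument; note also that both you and the paper pass rather quickly over why the image triple of a positively oriented chain triple is maximal rather than merely lying on an $m$-chain, which is where the orientation bookkeeping actually lives.
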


 \begin{proof}
\textit{Ad 1.}  By Section \ref{section_bergmann_cartan} we know that $||c_p||_{\infty}\leq 1$ and that $||\beta_{\calS}||_{\infty}\leq \rk(\mathcal{X}_{m,n})$.
Hence we obtain
\begin{align*}
|\textup{t}_b(\sigma)|&=\lVert \textup{t}_b(\sigma) c_p\rVert_{\infty}=\lVert \widehat{\Tbdue}(\textup{C}^2(\Phi^X)(\beta_{\calS})) \rVert_{\infty}\leq \rk(\mathcal{X}_{m,n}) \ ,
\end{align*}
since both the transfer map $\widehat{\textup{T}^2_b}$ and the pullback map $\textup{C}^2(\Phi^X)$ are norm non-increasing.

\textit{Ad 2.} Assume that the slice $\phi_x$ is chain preserving for almost every $x \in X$. Fixed a point $x\in X$, if $\phi_x$ is chain preserving and the triple $(\xi_0,\xi_1,\xi_2)$ lies on a chain, then the triple 
$(\phi_x(\overline{g}\xi_0),\phi_x(\overline{g}\xi_1),\phi_x(\overline{g}\xi_2))$
lies on a $m$-chain
for almost every $\overline{g} \in \Gamma \backslash \pu$. Hence, if we fix a triple $(\xi_0,\xi_1,\xi_2) \in (\partial\matH)^{(3)}$ of positive points on a chain, it holds $c_p(\xi_0,\xi_1,\xi_2)=1$ and by hypothesis it follows 
$$
\beta_{\calS}(\phi_x(\overline{g}\xi_0),\phi_x(\overline{g}\xi_1),\phi_x(\overline{g}\xi_2))=\rk(\mathcal{X}_{m,n})
$$
for almost every $\overline{g} \in \Gamma \backslash \pu, x \in X$. In this way we obtain
\begin{align*}
\textup{t}_b(\sigma)= &\int_{\Gamma\backslash\pu}\left( \int_X \beta(\phi_x(\overline{g}\xi_0),\phi_x(\overline{g}\xi_1),\phi_x(\overline{g}\xi_2))d\mu_X(x)\right)d\mu(\overline{g})= \\
 = & \int_{\Gamma\backslash\pu}\left( \int_X \rk(\mathcal{X}_{m,n}) d\mu_X(x)\right)d\mu(\overline{g})= \rk(\mathcal{X}_{m,n}) \ ,
\end{align*}
as claimed. 

For the converse assume $\textup{t}_b(\sigma)=\rk(\mathcal{X}_{m,n})$. Fixing a positive triple $(\xi_0,\xi_1,\xi_2) \in (\partial\matH)^{(3)}$ on a chain, it follows by Equation \eqref{formula} that, 
$$\beta(\phi_x(\overline{g}\xi_0),\phi_x(\overline{g}\xi_1),\phi_x(\overline{g}\xi_2))=\rk(\mathcal{X}_{m,n})$$
for almost every $\overline{g}\in \Gamma\setminus\pu$ and $x\in X$.
By the $\sigma$-equivariance of $\phi$ we argue that 
$$\beta(\phi_x(g\xi_0),\phi_x(g\xi_1),\phi_x(g\xi_2))=\rk(\mathcal{X}_{m,n}) \ ,$$
for almost every $g \in \pu$ and $x \in X$. By the transitivity of the $\pu$-action on chains, the map $\phi_x$ is chain preserving, as desired.  

The same arguments can be used for the negative case.
\end{proof}

By Proposition \ref{toledo_properties} it follows naturally the next
\begin{defn}\label{def:maximal:cocycles}
In the situation of Setup \ref{setup}, a cocycle $\sigma: \Gamma\times X\rightarrow \su$ is \emph{maximal} if $\textup{t}_b(\sigma)=\rk(\mathcal{X}_{m,n})$.
\end{defn}

It is worth mentioning that the notion of maximal measurable cocycles is a substantial extension of that one of maximal representations. 
Indeed, given any maximal $\rho:\Gamma \rightarrow \su$ in the sense of Pozzetti \cite{Pozzetti} and any
measurable function $f: X \rightarrow \su$, it is easy to check that the twisted cocycle $\sigma_{\rho}^f$ is actually maximal. 
Moreover, if $\rho$ is Zariski dense then it admits an essentially Zariski dense boundary map $\varphi:\partial\matH\rightarrow \calS$ \cite[Proposition 2.9]{Pozzetti}. Hence the induced boundary map $\phi:\partial\matH\times X\rightarrow \calS$ defined as in Remark \ref{remark_boundary} has in fact essentially Zariski dense slices. In particular it satisfies the hypothesis of Theorem \ref{main_theorem}. Hence our main theorem can be seen as the converse of 
what noticed above. 

We conclude this section with a characterization of boundary maps associated to maximal cocycles.
\begin{lemma}\label{lemma_rational_map}
In the situation of Setup \ref{setup}, if $\sigma$ is maximal and the slice $\phi_x$ has essentially Zariski dense image for almost every $x\in X$, then $\phi_x$ is rational for almost every $x \in X$.
\end{lemma}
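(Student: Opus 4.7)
The plan is to reduce the statement to Pozzetti's rigidity theorem for chain-preserving measurable maps \cite[Theorem 1.6]{Pozzetti}, by verifying fiber-wise that every hypothesis of that theorem is met on a full-measure set of $x\in X$.

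First, I would invoke Proposition \ref{toledo_properties}(2): since $\sigma$ is maximal by assumption, there exists a conull subset $X_1\subseteq X$ such that for every $x\in X_1$ the slice $\phi_x:\partial_\infty\mathbb{H}^p_{\mathbb{C}}\to\calS$ is chain-preserving. Next, by the standing hypothesis on the slices, there exists a conull $X_2\subseteq X$ such that $\phi_x$ has essentially Zariski dense image for every $x\in X_2$. Finally, by \cite[Chapter VII, Lemma 1.3]{margulis:libro} (cited in the introduction just before the statement of this lemma), the map $\phi_x$ is itself measurable for every $x$ in some conull subset $X_3\subseteq X$. Setting $X_0:=X_1\cap X_2\cap X_3$, one gets a conull subset on which each slice $\phi_x$ simultaneously is measurable, chain-preserving, and has essentially Zariski dense image.

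These are exactly the three hypotheses of \cite[Theorem 1.6]{Pozzetti}, which asserts that a measurable chain-preserving map $\partial_\infty\mathbb{H}^p_{\mathbb{C}}\to\calS$ with essentially Zariski dense image must be the restriction of a rational map between the complexifications $\mathbf{G}/\mathbf{P}$ and $\mathbf{H}/\mathbf{Q}$. Applying the theorem fiber-by-fiber for each $x\in X_0$ yields that $\phi_x\in\calQ=\textup{Rat}(\partial\matH,\calS)$ for almost every $x\in X$, which is the conclusion.

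The argument is essentially a packaging one rather than an obstacle: all the heavy analytic work is contained in Pozzetti's rigidity theorem, and the only thing to check is that the three fiber-wise hypotheses hold simultaneously on a common full-measure set. The one subtlety worth flagging is that ``essentially Zariski dense image of $\phi_x$'' is the input hypothesis of the lemma and will be established separately in Proposition \ref{prop:zariski:density} from the Zariski density of $\sigma$; once that is in hand, the present lemma follows with no further computation.
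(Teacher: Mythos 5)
Your proposal is correct and follows essentially the same route as the paper: the paper's proof simply invokes Proposition \ref{toledo_properties} (maximality gives chain-preserving slices) together with the assumed essential Zariski density of the slices, and then applies \cite[Theorem 1.6]{Pozzetti} fiber-wise. Your extra care about measurability of the slices and intersecting the conull sets is a fine (and tacit in the paper) bookkeeping step, but the argument is the same.
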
 
\begin{proof}
It follows by \cite[Theorem 1.6]{Pozzetti} since $\phi_x$ is essentially Zariski dense for almost $x\in X$ and it is chain preserving by 
Proposition \ref{toledo_properties}. 
\end{proof}

\section{Boundary maps}\label{section_boundary_map}
In this section our aim is to investigate the existence of a boundary map for cocycle with specific properties. In particular we exploit \cite[Theorem 5.3]{BFalgebraic} to prove Theorem \ref{thm_boundary_map}. From this we will argue the existence of boundary maps in our specific context. Then we will show that Zariski density of a measurable cocycle and the ergodicity of the standard Borel space imply that the boundary map must have Zariski dense slices. 

We recall that, by \cite[Theorem 5.3]{BFalgebraic},  in the setting of Theorem \ref{thm_boundary_map}
for any ergodic Lebesgue $\Gamma$-space $Y$ there exists an algebraic subgroup $L<H$ and a $\Gamma$-equivariant universal map 
$\phi:Y\rightarrow H/L$ such that, for any algebraic $H$-space $V$ and for any $\Gamma$-equivariant measurable map $\psi:Y \rightarrow V$, there exists a $\Gamma$-equivariant measurable map $\pi:L/H\rightarrow V$ which makes the following diagram commutative
\begin{center}
\begin{tikzcd}
Y\arrow{rr}{\phi}\arrow{rd}{\psi}&&H/L\arrow{ld}[u]{\pi}\\
& V.
\end{tikzcd}
\end{center}
This universal property is the fundamental ingredient in the proof of Theorem \ref{thm_boundary_map}.

\begin{proof}[Proof of Theorem \ref{thm_boundary_map}]
Since $B$ is a strong boundary \cite[Remarks 2.4]{BF14}, by \cite[Proposition 2.4]{MonShal0} both $B\times X$ and $B\times B\times X$ are ergodic $\Gamma$-space, thus we denote by $L,L_0$ the algebraic subgroups of $H$ and by $\phi:B\times X\rightarrow H/L, \ \phi_0:B\times B\times X\rightarrow H/L_0$ the $\Gamma$-equivariant universal maps associated respectively to $B\times X$ and to $B\times B\times X$.

Since $B\times X$ is amenable \cite[Proposition 4.3.4]{zimmer:libro}, then there exists a $\sigma$-equivariant map 
$\nu:B\times X\rightarrow \Prob(H/P)$ where $P<H$ is a minimal parabolic subgroup and $\Prob(H/P)$ is the space of probability measures on $H/P$.

By ergodicity of $\Gamma$ on $B\times X$ and by the smooth action of $H$ on $\Prob(H/P)$ \cite[Corollary 3.2.23]{zimmer:libro}, it follows that the induced map 
$$\bar{\nu}:B\times X\rightarrow \Prob(H/P)/H$$ is essentially constant. 
Equivalently, $\nu$ has image essentially contained in a single $H$-orbit, namely we get a map
$B\times X\rightarrow H/\textup{Stab}_H(\mu_0)$ where $\textup{Stab}_H(\mu_0)$ is the stabilizer in $H$ of some probability measure $\mu_0\in\Prob(H/P)$. By \cite[Corollary 3.2.23]{zimmer:libro} we have that $\textup{Stab}_H(\mu_0)$ is algebraic and amenable. Hence we can exploit the universal property of $\phi$, in order to get a $\Gamma$-equivariant map $H/L\rightarrow H/\textup{Stab}_H(\mu_0)$. Thus, up to conjugacy, $L<\textup{Stab}_H(\mu_0)$ and moreover, by amenability of $\textup{Stab}_H(\mu_0)$, it follows that $L$ is amenable. 

Consider now the map $\phi\circ\pi_2$ where $\pi_2:B\times B\times X\rightarrow B\times X$ is the projection on the last two factors. By the universal property of $\phi_0$, we get a $\Gamma$-equivariant map $\pi:H/L_0\rightarrow H/L$ such that the following diagram commutes
\begin{center}
\begin{tikzcd}
B\times B\times X\arrow{rr}{\phi_0}\arrow{rd}{\phi\circ\pi_2}&&H/L_0\arrow{ld}[u]{\pi}\\
& H/L.
\end{tikzcd}
\end{center}

Again, up to conjugation, we can assume that $L_0<L$ and, denoting by $$R:=\textup{Rad}_u(L)$$ the unipotent radical of $L$, we get the chain of inclusion $L_0<L_0R<L$ and the induced chain of projections \begin{tikzcd}
H/L_0\arrow{r}{p_1}&H/L_0R\arrow{r}{p_2}&H/L.
\end{tikzcd}

Define now the maps
$$\Phi:B\rightarrow \Meas(X,H/L),\;\;\;\;\Phi(\xi)(\cdot)\coloneqq\phi(\xi,\cdot)$$ and 
$$\Phi_0:B\times B\rightarrow \Meas(X,H/L_0),\;\;\;\;\Phi_0(\xi_1,\xi_2)(\cdot)\coloneqq \phi_0(\xi_1,\xi_2,\cdot).$$
Hence, for every $\gamma\in \Gamma$ and for almost every $\xi\in B$ we have
\begin{align*}
\Phi(\gamma\xi)(\cdot)&=\phi(\gamma\xi,\cdot)\\
&=\phi(\gamma\xi,\gamma\gamma^{-1}\cdot)\\
&=\sigma(\gamma,\gamma^{-1}\cdot)\phi(\xi,\gamma^{-1}\cdot)\\
&=\sigma(\gamma,\gamma^{-1}\cdot)\Phi(\xi)(\gamma^{-1}\cdot).
\end{align*}
Thus, if we define an action of $\Gamma$ on $\Meas(X,H/L)$ as 
\begin{equation}\label{gamma_action}
(\gamma f)(\cdot)\coloneqq\sigma(\gamma,\gamma^{-1}\cdot) f(\gamma^{-1}\cdot)
\end{equation}
for any $\gamma\in \Gamma$ and $f \in \Meas(X,H/L)$, 
we actually showed that $\Phi$ is a $\Gamma$-equivariant map. Similarly one can check the $\Gamma$-equivariance of $\Phi_0$.

Consider now the following commutative diagram
\begin{center}
\begin{tikzcd}
B\times B\arrow{r}[u]{\Phi_0}\arrow{d}[d]{\textup{pr}_2} & \Meas(X,H/L_0R) \arrow{d}[]{p_2^X}\\
B\arrow{r}[u]{\Phi}\arrow[dotted]{ur}{\Psi}&\Meas(X,H/L)
\end{tikzcd}
 \end{center}
 where $p_2^X(f)(\cdot)\coloneqq p_2(f(\cdot))$, and $\textup{pr}_2:B\times B\rightarrow B$ is the projection on the second factor.
 We remark that the existence of the map $\Psi$ follows from
 the fact that $p_2^X$ is fiberwise $\Gamma$-isometric
 and from the relative metric ergodicity of $\textup{pr}_2$.
 In fact, a metric on $\Meas(X,H/L_0R)$ compatible with the $\Gamma$-action defined in \eqref{gamma_action}, can be found as follows. Let $\mathbf{e} \in \Meas(X,H/L)$ be the constant function $\mathbf{e}(x):=L$. If we denote by $d$ the $L/R$-invariant metric on $L/L_0R$ cited in the proof of \cite[Theorem 3.4]{BF14}, we can set the metric $d_0$ on the special fiber $(p^X_2)^{-1}(\mathbf{e}) \cong \Meas(X,L/L_0R)$ as  
 \begin{equation*}
 d_0(f,g)\coloneqq \int_X \frac{d(f(x),g(x))}{1+d(f(x),g(x))} d\mu_X(x)
 \end{equation*}
for every $f,g\in\Meas(X,L/L_0R)$. 
Since the group $\Meas(X,H)$ acts transitively on $\Meas(X,H/L_0R)$, we can move the metric $d_0$ on the whole $\Meas(X,H/L_0R)$. To show the compatibility of the collection of metrics on the fibers, let $h \in \Meas(X,H/L)$ and let $f,g \in (p^X_2)^{-1}(h)$. Then 
\begin{align*}
d_{\gamma. h}(\gamma f, \gamma g)&=\int_X \frac{d_{\gamma.h}(\sigma(\gamma,\gamma^{-1}x)f(\gamma^{-1}x),\sigma(\gamma,\gamma^{-1}x)g(\gamma^{-1}x))}{1+d_{\gamma.h}(\sigma(\gamma,\gamma^{-1}x)f(\gamma^{-1}x),\sigma(\gamma,\gamma^{-1}x)g(\gamma^{-1}x))}d\mu_X(x) =\\
&=\int_X \frac{d_{h}(f(\gamma^{-1}x),g(\gamma^{-1}x))}{1+d_{h}(f(\gamma^{-1}x),g(\gamma^{-1}x))}d\mu_X(x)=\\
&=\int_X \frac{d_{h}(f(x),g(x))}{1+d_{h}(f(x),g(x))}d\mu_X(x)=d_h(f,g) \ ,
\end{align*}
where we used the transitivity of $\Meas(X,H)$ and the definition of the metrics on the fibers to move from the first line to the second one and we concluded exploiting the $\Gamma$-invariance of $\mu_X$. 

Define the $\Gamma$-equivariant map $\psi:B\times X\rightarrow H/L_0R$ as $\psi(\xi,x)\coloneqq \Psi(\xi)(x)$ for almost every $\xi\in B$ and almost every $x\in X$. By the universal property of $\phi$, there exists $q:H/L\rightarrow  H/L_0R$ which is in fact a isomorphism, and hence, up to conjugation, we can assume that $L_0R=L$. 

By defining 
$$
\phi \time \phi: B \times B \times X \rightarrow H/L \times H/L, \ \ (\phi \times \phi)(\xi_1,\xi_2,x):=(\phi(\xi_1,x),\phi(\xi_2,x)) \ ,
$$
we know by the universal property of $\phi_0$ that we have the following commutative diagram
\begin{center}
\begin{tikzcd}
B\times B\times X\arrow{rr}{\phi_0}\arrow{rd}{\phi \times \phi}&&H/L_0\arrow{ld}[u]{ }\\
& H/L \times H/L.
\end{tikzcd}
\end{center}

Additionally, notice that given $\gamma_1,\gamma_2 \in \Gamma$ we have 
$$
(\phi \times \phi)(\gamma_1 \xi_1,\gamma_2 \xi_2,x)=(\sigma(\gamma_1,\gamma_1^{-1}x)\phi(\xi_1,\gamma^{-1}x),\sigma(\gamma_2,\gamma_2^{-1}x)\phi(\xi_2,\gamma_2^{-1}x)) \ ,
$$
for almost every $\xi_1,\xi_2 \in B, x \in X$ by the $\sigma$-equivariance of $\phi$. 
As a consequence of the Zariski density of $\sigma$, the essential image of $\phi \times \phi$ is Zariski dense in $H/L \times H/L$. Thus $H/L_0$ is Zariski dense in $H/L \times H/L$ or, equivalently, $RL_0R$ is Zariski dense in $H$. Thus, by \cite[Lemma 3.5]{BF14} $H$ is parabolic and, being amenable, is also minimal. This concludes the proof. 

\end{proof}
 

\begin{oss}
Given a Zariski dense measurable cocycle $\sigma:\Gamma \times X \rightarrow G$, where $\Gamma < \textup{PU}(1,1)$ is a lattice and $G$ is a Hermitian Lie group, one the author \cite[Section 4.1]{savini2020} conjectured that $\sigma$ has a boundary map. Theorem \ref{thm_boundary_map} gives an answer to that question and has important consequences on the study of measurable cocycles of surface groups. Indeed, in the case that $\sigma$ is also maximal in the sense of Definition \ref{def:maximal:cocycles}, then by \cite[Theorem 1]{savini2020} the target must be a group of \emph{tube type}. In this way one obtains an true generalization of \cite[Theorem 5]{BIW1} to the context of measurable cocycles. 
\end{oss}

We can now prove the existence of a boundary map in our specific setting.

\begin{cor}\label{cor_boundary_map}
Let $p \geq 2$ and let $\Gamma < \pu$ a torsion-free lattice. Consider an ergodic standard Borel probability $\Gamma$-space. If $\sigma:\Gamma \times X \rightarrow \su$ is a Zariski dense measurable cocycle, then there exists a boundary map $\phi:\partial_\infty \mathbb{H}^p_{\mathbb{C}} \times X \rightarrow \calS$. 
\end{cor}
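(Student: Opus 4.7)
The plan is to obtain the corollary as a direct application of Theorem \ref{thm_boundary_map}, combined with the observation that the Shilov boundary $\calS$ is a parabolic quotient of $\su$ that factors through any minimal parabolic quotient.

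First I would verify the hypotheses of Theorem \ref{thm_boundary_map}. Since $\pu$ is a connected semisimple Lie group of non-compact type (with $p \geq 2$), the result of Bader and Furman recalled in Section \ref{section_relative_isometric_ergodic} asserts that $\partial_\infty \matH = \pu/P_{\pu}$, where $P_{\pu}$ is a minimal parabolic subgroup of $\pu$, is a $\Gamma$-boundary in the sense of Definition \ref{definition_boundary}. Moreover, $\su$ is a simple Lie group of non-compact type (for $1 \leq m \leq n$), and $\sigma$ is Zariski dense by assumption. The ergodicity of $(X,\mu_X)$ is also in the hypotheses.

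Next I would apply Theorem \ref{thm_boundary_map} with the strong boundary $B = \partial_\infty \matH$, which yields a $\sigma$-equivariant measurable map
$$
\phi_0: \partial_\infty \matH \times X \longrightarrow \su/P,
$$
where $P < \su$ is a minimal parabolic subgroup. As recalled in Section \ref{section_bergmann_cartan}, the Shilov boundary $\calS$ can be identified with the homogeneous space $\su/Q$, where $Q$ is a maximal parabolic subgroup stabilizing a maximal isotropic subspace of $\matC^{m+n}$. Since every minimal parabolic is contained, up to conjugation, in any given maximal parabolic, we may assume $P < Q$, obtaining a natural $\su$-equivariant smooth projection
$$
\pi: \su/P \longrightarrow \su/Q = \calS.
$$

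Finally I would define $\phi := \pi \circ (\id_{\partial_\infty \matH} \times \id_X) \circ \phi_0$, that is, the composition $\phi(\xi,x) := \pi(\phi_0(\xi,x))$. Measurability is preserved by composition with the continuous map $\pi$, and the $\sigma$-equivariance of $\phi$ follows immediately from the $\su$-equivariance of $\pi$ together with the $\sigma$-equivariance of $\phi_0$: for $\gamma \in \Gamma$ and almost every $(\xi,x)$,
$$
\phi(\gamma \xi, \gamma x) = \pi(\sigma(\gamma,x)\phi_0(\xi,x)) = \sigma(\gamma,x)\pi(\phi_0(\xi,x)) = \sigma(\gamma,x)\phi(\xi,x).
$$
This is the required boundary map, and there is no real obstacle beyond correctly invoking Theorem \ref{thm_boundary_map}; all the technical work is contained in that theorem.
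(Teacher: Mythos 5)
Your proof is correct and follows essentially the same route as the paper: invoke the Bader--Furman result that $\partial_\infty\matH$ is a $\Gamma$-boundary, apply Theorem \ref{thm_boundary_map} to get a $\sigma$-equivariant map into $\su/P$, and then compose with the projection $\su/P\rightarrow\su/Q=\calS$ induced by the inclusion of the minimal parabolic into the maximal one. The only difference is that you spell out the measurability and equivariance of the composition, which the paper leaves implicit.
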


\begin{proof}
By \cite[Theorem 2.3]{BF14} the Furstenberg boundary of $\Gamma$, which coincides with the visual boundary $\partial\matH$, is actually a $\Gamma$-boundary.
Hence, applying Theorem \ref{thm_boundary_map} we get a map into the Furstenberg boundary of $\su$ and we compose with the projection on the Shilov boundary (induced by the inclusion of a minimal parabolic subgroup into a maximal one).
\end{proof}

\begin{oss}
Thanks to Corollary \ref{cor_boundary_map}, in Setup \ref{setup} one can in fact drop the condition of existence of a boundary map for Zariski dense cocycles. Throughout the paper, when Setup \ref{setup} will be recalled, we will tacitly consider the boundary map provided by Corollary \ref{cor_boundary_map}. 
\end{oss}

Since the essential Zariski density of the slices of a boundary map will be needed in the proof of the main theorem, we are going to prove that property in the next 

\begin{prop}\label{prop:zariski:density}
In the setting of Setup \ref{setup} we assume that $\Gamma$ is ergodic on $X$ and that $\sigma$ is Zariski dense. Then for almost every $x \in X$ the slice $\phi_x$ is essentially Zariski dense. 
\end{prop}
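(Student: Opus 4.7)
The plan is to study the family of Zariski closures of the essential images of the slices $\phi_x$, descend from it to a $\Gamma$-invariant map into a countably separated quotient via ergodicity, and then use the Zariski density of $\sigma$ together with the homogeneity of the Shilov boundary $\calS$.

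For almost every $x \in X$ let $V_x \subseteq \calS$ denote the Zariski closure of the essential image of the slice $\phi_x \colon \partial\matH \to \calS$. From the $\sigma$-equivariance
\[
\phi(\gamma\xi,\gamma x)=\sigma(\gamma,x)\phi(\xi,x),
\]
together with the fact that $\gamma$ preserves the class of the measure on $\partial\matH$, one obtains after taking essential images and Zariski closures the equivariance
\[
V_{\gamma x}=\sigma(\gamma,x)\,V_x
\]
for every $\gamma \in \Gamma$ and almost every $x \in X$.

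Next I would realize $x\mapsto V_x$ as a measurable map into a countably separated Borel space. The collection $\mathcal{V}$ of Zariski closed subsets of $\calS$, stratified by degree, carries such a structure coming from a Chow/Hilbert-type parameter space, and $\su$ acts algebraically on each stratum. Consequently the quotient $\mathcal{V}/\su$ is countably separated, and the composite
\[
\bar V\colon X\longrightarrow \mathcal{V}/\su,\qquad x\longmapsto [V_x],
\]
is $\Gamma$-invariant by the equivariance above. The ergodicity of $\Gamma \curvearrowright X$ then forces $\bar V$ to be essentially constant, say equal to $[V_0]$. A measurable selection for the orbit map $\su \to \su/\textup{Stab}_{\su}(V_0)$ produces a measurable map $h\colon X \to \su$ with $V_x = h(x) V_0$ for almost every $x$.

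Substituting back into the equivariance relation gives
\[
h(\gamma x)^{-1}\sigma(\gamma,x)h(x)\in \textup{Stab}_{\su}(V_0)
\]
for every $\gamma \in \Gamma$ and almost every $x \in X$, so the twisted cocycle $\sigma^h$ takes values in the real algebraic subgroup $\textup{Stab}_{\su}(V_0)$. Since the algebraic hull is a cohomology invariant and $\sigma$ is Zariski dense, this forces $\textup{Stab}_{\su}(V_0) = \su$, i.e.\ $V_0$ is $\su$-invariant. As $\su$ acts transitively on $\calS$, the only nonempty $\su$-invariant Zariski closed subset of $\calS$ is $\calS$ itself, so $V_0 = \calS$ and therefore $V_x = \calS$ for almost every $x$, which is the claimed essential Zariski density.

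The main obstacle is setting up rigorously the Borel/algebraic structure on $\mathcal{V}$ and extracting the measurable selection $h$; these are standard (they underlie the well-posedness of the algebraic hull itself) but require some care in choosing the parameter space of subvarieties. Once they are in place, the remainder of the argument is formal, resting only on ergodicity and on the transitivity of the $\su$-action on $\calS$.
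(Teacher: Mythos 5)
Your proposal is correct and follows essentially the same route as the paper: form the essential Zariski closures $V_x$ of the slices, note their $\sigma$-equivariance, map measurably into a parameter space of subvarieties whose $\su$-quotient is countably separated, apply ergodicity and a measurable selection, and conclude via Zariski density of $\sigma$ (algebraic hull) and transitivity of $\su$ on $\calS$. The only difference is cosmetic: the paper realizes the parameter space by embedding $\mathbf{H}/\mathbf{Q}$ in $\mathbb{P}^N(\mathbb{C})$ and using $\textup{Var}(\mathbb{P}^N(\mathbb{C}))$ as in Zimmer, where you invoke a Chow/Hilbert-type space, and the paper gets measurability of $x\mapsto V_x$ from the measurable dependence of the slices (Margulis) rather than leaving it as a flagged technical point.
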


\begin{proof}
Before starting the proof, recall that the Shilov boundary $\mathcal{S}_{m,n}$ corresponds to the real points $\mathcal{S}_{m,n}=(\mathbf{H}/\mathbf{Q})(\mathbb{R})$ of the quotient of the complexification $\mathbf{H}$ of $\textup{SU}(m,n)$ (which is $\textup{SL}(m+n,\mathbb{C})$) modulo a maximal parabolic subgroup $\mathbf{Q}$ stabilizing a maximal isotropic subspace of $\mathbb{C}^{m+n}$. For almost every $x \in X$, we are going to denote by $\mathbf{V}_x \subset \mathbf{H}/\mathbf{Q}$ the smallest Zariski closed set such that $V_x:=\mathbf{V}_x(\mathbb{R}) \subset \mathcal{S}_{m,n}$ and $\phi_x^{-1}(V_x)$ has full measure in $\partial \mathbb{H}^n_{\mathbb{C}}$. As noticed in \cite{Pozzetti} those sets exist by the Noetherian property.


By embedding suitably $\mathbf{H}/\mathbf{Q}$ in some complex projective space $\mathbb{P}^{N}(\mathbb{C})$, we can define a map, 
$$
\mathfrak{v}: X \rightarrow \textup{Var}(\mathbb{P}^N(\mathbb{C})) \ , \ \ \ \mathfrak{v}(x):=\mathbf{V}_x \ .
$$
Here $ \textup{Var}(\mathbb{P}^N(\mathbb{C}))$ is the set of all the possible closed varieties inside $\mathbb{P}^N(\mathbb{C})$ with the measurable structure coming from the Hausdorff metric (Zariski closed sets are closed in the Euclidean topology and this makes sense). The map $\mathfrak{v}$ is measurable since the slice $\phi_x$ varies measurably with respect to $x \in X$ as a consequence of \cite[Chapter VII, Lemma 1.3]{margulis:libro}, by the measurability of $\phi$. By the same proof of \cite[Proposition 4.2]{sarti:savini}, the map $\mathfrak{v}$ is also $\sigma$-equivariant, that is 
$$
\mathfrak{v}(\gamma x)=\sigma(\gamma,x)\mathfrak{v}(x) \ ,
$$
or equivalently 
\begin{equation}\label{eq:v:equivariant}
\textbf{V}_{\gamma x}=\sigma(\gamma,x)\textbf{V}_x \ ,
\end{equation}
for every $\gamma \in \Gamma$ and almost every $x \in X$. 

On $\textup{Var}(\mathbb{P}^N(\mathbb{C}))$ the group $\textup{GL}(N+1,\mathbb{C})$ acts naturally on the left. As noticed in the proof of \cite[Proposition 3.3.2]{zimmer:libro}, the set $\textup{Var}(\mathbb{P}^N(\mathbb{C}))$ decomposes as a countable union of varieties and the action of $\textup{GL}(N+1,\mathbb{C})$ on those varieties is algebraic and hence smooth. Seeing $\su$ as a subgroup of $\textup{GL}(N+1,\mathbb{C})$, we argue that the quotient $\Sigma:=\textup{Var}((\mathbb{P}^N(\mathbb{C}))/\textup{SU}(m,n)$ is countably separated and $\mathfrak{v}$ induces a map 
$$
\overline{\mathfrak{v}}: X \rightarrow \Sigma \ , \ \ \overline{\mathfrak{v}}(x):=\su \cdot \mathbf{V}_x \ , 
$$
which is $\Gamma$-invariant, since $\mathfrak{v}$ was $\sigma$-equivariant by Equation \eqref{eq:v:equivariant}. By the ergodicity of $\Gamma$ on $X$, the above map must be essentially constant. Equivalently $\mathfrak{v}$ must take values into a unique orbit $\textup{SU}(m,n) \cdot \mathbf{V}_{x_0}$, for some $x_0 \in X$. Hence there must exist a measurable map $g:X \rightarrow \textup{SU}(m,n)$ such that 
$$
\mathbf{V}_x=g(x)\mathbf{V}_{x_0} \ ,
$$
and the measurability of $g$ follows by the measurability of $\phi$ and by \cite[Corollary A.8]{zimmer:libro}. This implies that $\sigma$ is cohomologous to a cocycle preserving $\mathbf{V}_{x_0}$ and the latter must coincide with $\mathbf{H}/\mathbf{Q}$ by the Zariski density assumption on $\sigma$. Hence for almost every $x \in X$, we have $\mathbf{V}_x=\mathbf{H}/\mathbf{Q}$ and hence $V_x=\mathcal{S}_{m,n}$, which means that almost every slice is essentially Zariski dense. 
\end{proof}

\section{Proof of the main theorem}\label{section_proof}
The aim of this section is to prove Theorem \ref{main_theorem}. The proof follows the line of that in \cite[Theorem 4.1]{zimmer:annals} 
and is based on both
Lemma \ref{lemma_rational_map} and the following useful result about invariant measures on quotients of algebraic groups. Before stating the lemma, recall that a $\mathbb{R}$-group is an algebraic group whose defining equations are given by polynomials with real coefficients. 

\begin{lemma}\label{lemma_measure}
 Let $\mathbf{G}$ be a semisimple algebraic $\mathbb{R}$-group and let $\mathbf{G}_0$ be a $\mathbb{R}$-subgroup. Denote by $G=\mathbf{G}(\mathbb{R})$ and $G_0=\mathbf{G}_0(\mathbb{R})$ the associated real points, respectively. Consider a lattice $\Gamma $ in $G$.
 Then, any measure on $G/G_0$ which is invariant by left translations in $\Gamma$, it is also a $G$-invariant measure.
\end{lemma}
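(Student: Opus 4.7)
The plan is to combine two classical facts about semisimple groups: that stabilizers of probability measures under algebraic actions are algebraic subgroups, and that lattices in semisimple groups are Zariski dense (the Borel density theorem). The strategy is therefore to show that $\Stab_G(\mu)$ contains $\Gamma$, to observe that it is algebraic, and then to conclude by Borel density that it must be all of $G$.

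First I would consider the natural $G$-action on the space $\textup{Prob}(G/G_0)$ of probability measures on $G/G_0$. The $\Gamma$-invariance of the given measure $\mu$ immediately translates into the inclusion $\Gamma \subseteq \Stab_G(\mu)$. Next, since $\mathbf{G}_0$ is an $\mathbb{R}$-subgroup of the algebraic $\mathbb{R}$-group $\mathbf{G}$, the quotient $\mathbf{G}/\mathbf{G}_0$ carries the structure of a quasi-projective algebraic variety on which $G$ acts algebraically. By the algebraicity result for stabilizers of probability measures under algebraic actions (see for instance \cite[Theorem 3.2.4]{zimmer:libro}), the subgroup $\Stab_G(\mu)$ is the group of real points of an algebraic subgroup of $\mathbf{G}$.

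Finally I would invoke the Borel density theorem \cite[Corollary 3.2.5]{zimmer:libro}: since $\mathbf{G}$ is semisimple and $\Gamma$ is a lattice in $G$, the Zariski closure of $\Gamma$ in $\mathbf{G}$ coincides with $\mathbf{G}$ itself. Combined with the algebraicity statement above, the Zariski-closed subgroup $\Stab_G(\mu)$ contains the Zariski-dense subset $\Gamma$, hence it must equal all of $G$. This yields the desired $G$-invariance of $\mu$.

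The main technical point requiring care is handling possible compact factors of $\mathbf{G}$ in the Borel density step, since the classical formulation assumes absence of compact factors; the standard workaround is to decompose $\mathbf{G}$ as a product of its compact and noncompact parts, observe that the noncompact part receives a Zariski-dense projection of $\Gamma$, and note that the compact part automatically preserves any probability measure on an algebraic variety by averaging. Once this is addressed, the conclusion follows immediately from the two named theorems without further calculation.
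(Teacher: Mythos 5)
Your skeleton is the same as the paper's: show $\Gamma\subseteq\Stab_G(\mu)$, show that $\Stab_G(\mu)$ is (the set of real points of) an algebraic subgroup of $\mathbf{G}$, and conclude by Borel density that it is all of $G$. The one step you black-box is, however, exactly the content of the paper's proof. The result you cite, \cite[Theorem 3.2.4]{zimmer:libro}, is a statement about probability measures on projective space and their stabilizers in $\textup{PGL}(N,\mathbb{R})$; it does not apply verbatim to a measure on the quasi-projective variety $G/G_0$. The paper supplies the missing reduction: by Chevalley's theorem there is a rational representation $\pi:\mathbf{G}\rightarrow\textup{PGL}(N,\mathbb{C})$ defined over $\mathbb{R}$ such that $\pi(G_0)$ is the stabilizer of a line $\ell$, so $G/G_0$ identifies with the orbit $G\cdot[\ell]\subseteq\mathbb{P}^{N-1}(\mathbb{R})$; one pushes $\mu$ forward to $\nu=\overline{\pi}_*\mu$, applies Zimmer's theorem to $\nu$ to get an algebraic stabilizer $\mathbf{L}$, and pulls back through $\pi$ to see $\Stab_G(\mu)=\pi^{-1}(\mathbf{L})\cap G$, which is the trace of a Zariski closed subgroup. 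If you intend to quote the general fact ``stabilizers of probability measures under algebraic actions are algebraic'' as known, the argument goes through, but you should either prove that fact (which is the Chevalley reduction above) or cite a statement that actually covers actions on arbitrary varieties rather than Theorem 3.2.4.

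Your closing remark about compact factors is also not correct as written: a compact group does not ``automatically preserve any probability measure''; averaging over the compact factor produces some invariant measure, but says nothing about invariance of the given $\mu$ (a point mass on $S^2$ is not $\textup{SO}(3)$-invariant). Indeed, if compact factors are allowed the lemma itself fails: take $\mathbf{G}$ anisotropic, so that $G$ is compact and the trivial subgroup is a lattice, and take $\mu$ a point mass on $G/G_0$. The paper is silent on this point as well, and in the intended application $G=\pu$ is simple of non-compact type, so nothing is lost there; but your proposed workaround does not repair the general statement.
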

\begin{proof}
 Since $\mathbf{G}$ is an affine algebraic group, by Chevalley's Theorem \cite[Proposition 3.1.3]{zimmer:libro} there exists a suitable positive integer $N$ and a rational representation $\pi: \mathbf{G} \rightarrow \textup{PGL}(N,\mathbb{C})$ defined over $\mathbb{R}$ such that the image $\pi(G_0)$ coincides with the stabilizer in $G$ of a line $\ell \subset \mathbb{R}^{N}$.
 Thus we get a map
 $$\overline{\pi}:G/G_0 \rightarrow G\cdot [\ell] \subseteq \mathbb{P}^{N-1}(\mathbb{R}), \;\;\;\; g\cdot G_0 \mapsto \pi(g)[\ell] \ .$$ 

%


Consider now a measure $\mu$ on $G/G_0$. Its push-forward measure $\nu:=\overline{\pi}_*\mu$ on $\mathbb{P}^{N-1}(\mathbb{R})$ is supported on the orbit $G\cdot [\ell]$.
By Zimmer \cite[Theorem 3.2.4]{zimmer:libro} the stabilizer $$L:=\Stab _{\textup{PGL}(N,\mathbb{C})}(\nu)$$ corresponds to the real points of an algebraic group $\mathbf{L} < \textup{PGL}(N,\mathbb{C})$ defined over $\mathbb{R}$. Since the representation $\pi$ is rational, the preimage $$\mathbf{H}=\pi^{-1}(\textbf{L})$$ is a Zariski closed subgroup of $\mathbf{G}$. The intersection $$H:=\textbf{H} \cap G$$ coincides with the stabilizer $\Stab_G(\mu)$. By hypothesis we have that $\Gamma \subseteq \Stab_G(\mu)=\mathbf{H} \cap G$. Passing to the Zariski closures, we get
$$\mathbf{G}=\overline{\Gamma}^Z\subseteq \overline{\Stab_G(\mu)}^Z \subseteq \mathbf{H} \ ,$$
where the first equality follows by the Borel Density Theorem \cite[Theorem 3.2.5]{zimmer:libro}. 
Hence the stabilizer of $\mu$ in $G$ is the whole group and we are done.
\end{proof}

We are now able to give the proof of the main theorem.

\begin{proof}[Proof of Theorem \ref{main_theorem}]

Assuming the same algebraic structures on $\partial\matH$ and $\calS$ as those ones described in Section \ref{section_actions}, 
we denote the set of rational maps between boundaries by
$$\calQ\coloneqq \textup{Rat} (\partial \matH,\calS) \  .$$
As described in Section \ref{section_actions}, there exists a natural action of $\pu\times \su$ on it defined as follows:
for each $h \in \pu$, $g\in \su, \xi\in \partial\matH$ and $f\in \calQ$,
$$((h,g)\cdot f)) (\xi)\coloneqq g\cdot f(h^{-1} \xi) \ .$$

Since $\sigma$ is Zariski dense, by Corollary \ref{cor_boundary_map} we know that there exists a boundary map $\phi:\partial_\infty \mathbb{H}^p_{\mathbb{C}} \times X \rightarrow \calS$. 
Being $\sigma$ also maximal, by Lemma \ref{lemma_rational_map} we can define the function
$$\Phi:X\rightarrow \calQ, \;\;\;\; x\mapsto \phi_x$$
and by composing it with the projection $\calQ\rightarrow\calQ/\su$ we obtain 
$$\widehat{\Phi}:X\rightarrow \widehat{\calQ}\coloneqq\calQ/\su, \;\;\;\; x\mapsto \su\cdot\phi_x \ .$$
Since $\phi$ is a boundary map for $\sigma$, its equivariance implies 
\begin{align}\label{equivariance}
 \Phi(\gamma x)&=\phi_{\gamma x}(\ \cdot \ ) =\\
\nonumber &= \phi(\cdot,\gamma x)= \\
\nonumber &= \phi(\gamma\gamma^{-1}\cdot , \gamma x) = \\
\nonumber &= \sigma(\gamma,x)\phi (\gamma^{-1}\cdot , x) = \\
\nonumber &= \sigma(\gamma,x)(\gamma\Phi(x)).
\end{align}
In the equation above, notice that $\gamma \in \Gamma$ acts on the quotient $\widehat{\mathcal{Q}}$ via $\gamma \cdot (\su \cdot \psi):=\su \cdot (\gamma \cdot \psi)$, where $\gamma \cdot \psi$ is the rational map $(\gamma \cdot \psi) (\xi)=\psi(\gamma^{-1} \xi)$, for $\xi \in \partial \mathbb{H}^{p}_{\mathbb{C}}$. As a consequence of Equation \eqref{equivariance} we get
$$
\Phi(\gamma x) \in \su \cdot \gamma \cdot \Phi(x) \ ,
$$
and hence it holds 
$$
\widehat{\Phi}(\gamma x)= \gamma \cdot \widehat{\Phi}(x) \ ,
$$
from which we deduce that $\widehat{\Phi}$ is a $\Gamma$-equivariant map on the quotient.
It follows that the induced map
$$\widehat{\widehat{\Phi}}:X\rightarrow \widehat{\widehat{\mathcal{Q}}}:=\calQ/\pu\times\su, \;\;\;\; x\mapsto \pu\times\su\cdot\phi_x.$$
is $\Gamma$-invariant and, since $\Gamma$ acts ergodically on $X$,
it is essentially constant or, equivalently, $\widehat{\Phi}$ takes values in a single $\pu$-orbit. Notice that to conclude that $\widehat{\widehat{\Phi}}$ is essentially constant, we exploited the fact that $\widehat{\widehat{\mathcal{Q}}}$ is countably separated because of the smoothness of the joint action of both $\pu$ and $\su$ on $\mathcal{Q}$ (see Section \ref{section_actions}). 

Let $x_0\in X$ be a point such that $\widehat{\Phi}$ takes value in the orbit $\pu \cdot \widehat{\Phi}(x_0)$ and set $G_0\coloneqq \Stab_{\pu}(\widehat{\Phi}(x_0))$. The latter is an algebraic $\mathbb{R}$-subgroup of $\pu$ by \cite[Proposition 3.3.2]{zimmer:libro}. Since the orbit $\pu \cdot \widehat{\Phi}(x_0)$ may be identified with $\pu/G_0$ by the smoothness of the action \cite[Theorem 2.1.14]{zimmer:libro}, we can compose the map 
$$
\widehat{\Phi}: X \rightarrow \pu \cdot \widehat{\Phi}(x_0) \cong \pu/G_0 \ ,
$$
with a measurable section
$$
s:\pu/G_0 \rightarrow \pu \ ,
$$
which exists by \cite[Corollary A.8]{zimmer:libro}. The previous composition gives us a map 
\begin{equation*}
g: X\rightarrow \pu
\end{equation*}
which is measurable (being the composition of measurable maps) and such that 
$$\widehat{\Phi}(x)=g(x)\widehat{\Phi}(x_0)$$
for almost every $x\in X$.
By definition, we have
\begin{align*}
\widehat{\Phi}(\gamma x)=g(\gamma x)\widehat{\Phi}(x_0)
\end{align*}
for every $\gamma\in \Gamma$ and almost every $x\in X$. On the other hand, by equivariance we get
\begin{align*}
 \widehat{\Phi}(\gamma x)=\gamma(\widehat{\Phi}(x))
\end{align*}
and thus 
$$(\gamma g(x))^{-1}g(\gamma x) \in G_0.$$
The induced map 
\begin{equation*}
\bar{g}: X\rightarrow \pu/G_0
\end{equation*}
is $\Gamma$-equivariant and this ensures the existence of a $\Gamma$-invariant finite measure (by push-forward) on $\pu/G_0$.
By Lemma \ref{lemma_measure}, such a measure is in fact $\pu$-invariant and, since
$G_0$ is a closed subgroup, it coincides with $\pu$ again by the Borel Density Theorem \cite[Theorem 3.2.5]{zimmer:libro}.
Hence $\widehat{\Phi}$ is essentially constant or, equivalently, 
$\Phi$ takes values in a single $\su$-orbit.
Denote again by $\phi_0$ an element in the orbit and choose a map
$$f:X \rightarrow \su$$
satisfying
$$\Phi(x)=f (x) \phi_0.$$
The measurability of $f$ follows by the same argument we used to prove the measurability of $g$. By rewriting Equation \eqref{equivariance} using $f$ we obtain 
\begin{equation}
f(\gamma x)\phi_0=\sigma(\gamma,x) f(x)\gamma\phi_0 
\end{equation}
and then
\begin{equation}\label{formula4}
\gamma^{-1}\phi_0= f(\gamma x)^{-1}\sigma(\gamma,x) f(x)\phi_0.
\end{equation}
We define 
$$\beta:\Gamma\times X\rightarrow \su, \;\;\;\; \beta(\gamma,x)\coloneqq f(\gamma x)^{-1}\sigma(\gamma,x) f(x)$$
and, by Equation \eqref{formula4}, we get
$$\phi_0(\xi)=\beta(\gamma,x_1)^{-1}\beta(\gamma,x_2)\phi_0(\xi)$$
for all $\gamma\in\Gamma$ and for almost all $\xi\in\partial\matH, x_1,x_2\in X$. Hence 
$\beta(\gamma,x_1)^{-1}\beta(\gamma,x_2)$ lies in the stabilizer of the image of $\phi_0$. Since the latter is essentially Zariski dense, the product $\beta(\gamma,x_1)^{-1}\beta(\gamma,x_2)$ actually stabilizes $\calS$. Since the pointwise stabilizer of $\calS$ (that is the kernel of the action of $\su$ on $\mathcal{S}_{m,n}$) is trivial, it follows that $\beta$ does not depend on $X$ and hence it is the cocycle associated to a representation 
$$\rho:\Gamma\rightarrow \su \ .$$
Moreover, by Equation \eqref{formula4}, the map $\phi_0$ is a boundary map for $\rho$, it is rational and has  
essentially Zariski dense image in $\su$. It follows by \cite[Theorem 1.1]{Pozzetti} that $\rho$ is the 
restriction of a representation 
$$\widetilde{\rho}:\pu\rightarrow \su$$
and, finally, $\sigma$ is cohomologous to the restriction to $\Gamma$ of the induced cocycle $\sigma_{\tilde \rho}$, as desired.
\end{proof}

We can now prove that, except when either $m=1$ or $m=n$, there are no maximal Zariski dense cocycle as in the statement of Theorem \ref{main_theorem}.
\begin{proof}[Proof of Proposition \ref{corollary}.]
Following the proof of Theorem \ref{main_theorem}, given such a maximal cocycle, there exists a maximal representation
$\rho:\Gamma\rightarrow \su$. Following \cite[Corollary 1.2]{Pozzetti}, if $m\neq n$, such a representation cannot exist.
\end{proof}

Since maximal measurable cocycles into $\su$ cannot be Zariski dense when $1<m<n$, it is quite natural to ask which could be their algebraic hull. We answered to this question in  \cite{sarti:savini} and we report here both the statement and the proof for sake of completeness. One can see that the characterization we obtained is completely analogous to the one given in \cite[Theorem 1.3]{Pozzetti}. 

\begin{prop} \cite[Proposition 1.2]{sarti:savini} \label{prop:algebraic:hull}
In the setting of Setup \ref{setup} we assume that $1<m<n$, that $X$ is $\Gamma$-ergodic and that $\sigma$ is maximal.  Denoting by $\textup{\textbf{H}}$ the algebraic hull of $\sigma$ and by $H=\textup{\textbf{H}}(\matR)^{\circ}$, then $H$ splits as the product $K\times L_{nt}\times L_t$, where $K$ is a compact subgroup of $\su$,  $L_t$ is a Hermitian Lie group of tube type and $L_{nt}$ is a Hermitian Lie group not of tube type that splits again as a product of several copies of $\textup{SU}(n,1)$.
\end{prop}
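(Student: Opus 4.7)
The plan is to reduce the statement to Pozzetti's structure theorem for maximal representations by first showing that, up to cohomology, $\sigma$ comes from a representation of $\pu$ into $H$. The argument runs parallel to the proof of Theorem \ref{main_theorem}, with $\su$ replaced by the algebraic hull $H$ at the relevant places, and the final classification is then inherited from the representation case.

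First, by the definition of algebraic hull, after replacing $\sigma$ by a cohomologous cocycle we may assume $\sigma(\Gamma\times X)\subseteq H$ and that the resulting cocycle is Zariski dense in $\textbf{H}$. Since the Toledo invariant is cohomology invariant (see Section \ref{section_toledo}), this new representative remains maximal. Applying Theorem \ref{thm_boundary_map} to $\sigma$ viewed as a Zariski dense cocycle into $H$ gives a $\sigma$-equivariant boundary map $\phi_H:\partial_\infty\matH\times X\to H/P_H$, where $P_H$ is a minimal parabolic of $H$. Composing with the natural $H$-equivariant projection $H/P_H\to H\cdot\xi_0\subseteq\calS$, where $\xi_0$ is a suitable Shilov point of the symmetric space associated to $\su$, we obtain a boundary map $\phi:\partial_\infty\matH\times X\to\calS$ for $\sigma$ whose slices $\phi_x$ have essential image contained in the $H$-orbit $H\cdot\xi_0$. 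An adaptation of Proposition \ref{prop:zariski:density} ensures that almost every slice is essentially Zariski dense in this $H$-orbit.

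Next, maximality of $\sigma$ together with Proposition \ref{toledo_properties} shows that $\phi_x$ is chain-preserving for almost every $x\in X$, so by \cite[Theorem 1.6]{Pozzetti} each such slice is rational. This places us in the setting of the proof of Theorem \ref{main_theorem}: the map $\Phi:X\to\textup{Rat}(\partial\matH,\calS)$, $\Phi(x)\coloneqq\phi_x$, is $\sigma$-equivariant and, by ergodicity of $\Gamma$ on $X$ and smoothness of the joint $\pu\times\su$-action (Section \ref{section_actions}), its image lies essentially in a single $\pu\times H$-orbit. Using Lemma \ref{lemma_measure} and the Borel density theorem exactly as in the proof of Theorem \ref{main_theorem}, we obtain a measurable $f:X\to H$ such that the twisted cocycle $\sigma^f$ is the cocycle $\sigma_\rho$ associated to a continuous representation $\rho:\pu\to H\hookrightarrow\su$ whose image is Zariski dense in $\textbf{H}$. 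Cohomology invariance of the Toledo invariant implies that $\rho$ is a maximal representation.

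Finally, applying Pozzetti's structure theorem \cite[Theorem 1.3]{Pozzetti} to $\rho$ yields the decomposition $\textbf{H}(\matR)^{\circ}=K\times L_{nt}\times L_{t}$, with $K$ compact, $L_t$ Hermitian of tube type, and $L_{nt}$ Hermitian not of tube type and isomorphic to a product of copies of $\textup{SU}(n_i,1)$. The main technical hurdle is carrying out Step 3 when the slices of $\phi$ are Zariski dense only in a proper $H$-orbit rather than in all of $\calS$: one must verify that the stabilizer computation in the proof of Theorem \ref{main_theorem} (namely, that $\beta(\gamma,x_1)^{-1}\beta(\gamma,x_2)$ acts trivially on the image of a slice and hence is trivial) still goes through, which it does precisely because of Zariski density of the slices in the $H$-orbit together with the Zariski density of $\sigma$ in $\textbf{H}$.
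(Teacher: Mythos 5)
Your route --- upgrading $\sigma$ to the cocycle of a representation $\rho:\pu\rightarrow H$ and then invoking Pozzetti's structure theorem --- is not the paper's argument, and as written it has genuine gaps. First, Theorem \ref{thm_boundary_map} is stated for a \emph{simple} Lie group of non-compact type, whereas the algebraic hull you apply it to is precisely the group the proposition asserts to be of the form $K\times L_{nt}\times L_t$: reductive, with a compact factor and in general several non-compact factors. So the boundary map $\phi_H:\partial\matH\times X\rightarrow H/P_H$ cannot be obtained by citing that theorem; at best one must work factor by factor. Second, and more seriously, rationality of the slices comes from Lemma \ref{lemma_rational_map}, i.e.\ from \cite[Theorem 1.6]{Pozzetti}, which requires the slices to be chain-preserving \emph{and essentially Zariski dense in $\calS$}; your slices are Zariski dense only in a proper $H$-orbit $H\cdot\xi_0\subset\calS$, and you give no argument that Pozzetti's rationality theorem (or the extension theorem \cite[Theorem 1.1]{Pozzetti}, which again assumes Zariski density in $\su$) survives this weakening --- you flag it as ``the main technical hurdle'' and then simply assert it goes through, which is not a proof. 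Note also that if this reduction did work it would establish superrigidity for maximal cocycles that are \emph{not} Zariski dense in $\su$, a statement strictly stronger than the proposition and nowhere claimed in the paper.

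The paper's actual proof goes through tightness rather than superrigidity: maximality implies the cocycle is tight, and by \cite[Theorem 3.5]{savini2020} the algebraic hull of a tight cocycle is reductive, $H=K\times L_{nc}$ with $L_{nc}$ a semisimple Hermitian group \emph{tightly embedded} in $\su$. Composing $\sigma$ with the projections onto the simple factors $L_i$ of $L_{nc}$ yields Zariski dense maximal cocycles, so \cite[Theorem 5]{moraschini:savini:2} excludes $\textup{SU}(1,1)$ factors; then \cite[Proposition 2.5]{Pozzetti} --- a statement about tight embeddings, not about maximal representations --- forces each $L_i$ to be of tube type or isomorphic to some $\textup{SU}(p_i,q_i)$, and Proposition \ref{corollary} applied to the factor cocycles rules out $1<p_i<q_i$, with Zariski density pinning the non-tube factors to $\textup{SU}(n,1)$. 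To salvage your approach you would need, at minimum, a version of Theorem \ref{thm_boundary_map} for reductive targets and a rationality theorem for slice maps that are Zariski dense only in a closed $H$-orbit; neither is available in the paper.
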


\begin{proof}
Recall that a measurable cocycle $\sigma:\Gamma \times X \rightarrow \su$ is \emph{tight} if it satisfies
$$
\lVert \textup{H}^2(\Phi^X)(\beta_{\calS})\rVert_\infty= m \ .
$$
Since we assumed $\sigma$ maximal, it must be tight by the same proof of \cite[Proposition 4.7]{sarti:savini}. By \cite[Theorem 3.5]{savini2020} the group $H$ is reductive and hence it splits as the product of a compact factor $L_c=K$ 
 and a non compact factor $L_{nc}$. 
By splitting $L_{nc}$ in simple factors $L_1,\ldots,L_k$, we notice that the composition of 
 $\sigma$ with any projection $\pi_i: L_1\times\ldots\times L_k\rightarrow L_i$ is a Zariski dense maximal measurable cocycle from 
 a complex hyperbolic lattice to $L_i$.
 It follows by \cite[Theorem 5]{moraschini:savini:2} that none of the $L_i$'s can be isomorphic to $\textup{SU}(1,1)$. 
 Hence the inclusion $L_{nc}\rightarrow \textup{SU}(m,n)$ satisfies the hypothesis of \cite[Proposition 2.5]{Pozzetti}, which states that each factor 
 $L_i$ is either of tube type or isomorphic to some $\textup{SU}(p_i,q_i)$.
 We denote by $L_t$ the tube-type part and we focus on the non-tube type factors.
 Again by \cite[Theorem 5]{moraschini:savini:2}, if one of $\textup{SU}(p_i,q_i)$'s is actually of the form $\textup{SU}(s,1)$ (that is $q_i=1$), we must have that $s$ is equal to $n$ by Zariski density. By Corollary \ref{corollary} the Zariski density of an ergodic cocycle taking values into $\textup{SU}(p_i,q_i)$ implies necessarily that $q_i=1$ and we are done. 
\end{proof}

%
One can investigate the behavior of measurable cocycles of complex hyperbolic lattices into other Hermitian Lie groups than $\su$. In \cite{sarti:savini} we study the case of isometries of infinite dimensional Hermitian symmetric spaces. A hint for working with infinite dimensional measurable cocycles was given by the recent paper by Duchesne, L\'{e}cureux and Pozzetti \cite{duchesne:pozzetti} that investigate maximal representations of surface groups and hyperbolic lattices into infinite dimensional Hermitian groups. 

\bibliographystyle{amsalpha}

\bibliography{biblionote}

\end{document}